\begin{document}

\newtheoremstyle{basico}
  {0,2cm}
  {0}
  {\itshape}
  {0,5cm}
  {\bfseries}
  {}
  {0,2cm}
  {\thmname{#1}\thmnumber{ #2}\thmnote{ #3}:}
\theoremstyle{basico}  

\newtheorem{teoprin}{Theorem}  
\newtheorem{coro}{Corollary}
\newtheorem{teo}{Theorem}[section]
\newtheorem{lema}[teo]{Lemma}
\newtheorem{prop}[teo]{Proposition}
\newtheorem{defi} [teo]  {Definition}
\newtheorem{rem}[teo]{Remark}
\newtheorem{afirm}{Afirmation}

\renewcommand\theteoprin{\Alph{teoprin}}

\newtheoremstyle{ejemplos}
  {0,2cm}
  {0}
  {}
  {0,5cm}
  {\bfseries}
  {}
  {0,2cm}
  {\thmname{#1}\thmnumber{ #2}:\thmnote{ #3}}
\theoremstyle{ejemplos}
\newtheorem{ex}{Example}
\newtheorem{quest}{Question}

\newcommand{\D}{{\mathbb D}}
\newcommand{\N} {\mathbb N}

\newcommand{\A}{\mathcal{A}}
\newcommand{\M}{\mathcal{M}}
\renewcommand{\P}{\mathcal{P}}
\newcommand{\X}{\mathcal{X}}
\newcommand{\Z} {\mathbb Z}
\newcommand{\R} {\mathbb R}
\newcommand{\e} {\varepsilon}

\newcommand{\card}{\operatorname{card}}
\newcommand{\interior}{\operatorname{int}}
\newcommand{\deter}{\operatorname{det}}
\newcommand{\supp}{\operatorname{supp}}
\newcommand{\ind}{\operatorname{ind}}
\newcommand{\per}{\operatorname{Per}}
\newcommand{\dif}{\operatorname{Diff}}
\newcommand{\emb}{\operatorname{Emb^r}}

\title{ On the tree models of mildly dissipative maps}

\author{Javier Correa}
\address{Universidade Federal de Minas Gerais}
\email {jcorrea@mat.ufmg.br}

\author{Elizabeth Flores} 
\address{Universidad Nacional Mayor de San Marcos} 
\email {esalazarf@unmsm.edu.pe}

\thanks{The first author has been supported by CAPES and CNPq, and would like to thank UFRJ since this work started at his postdoctoral position in said university. The second author has been supported by CAPES and CNPq.} 

\subjclass[2020]{37C15, 37D25, 37D45, 37E25}

\keywords{surface diffeomorphisms, mild dissipation, dynamical invariants}

\begin{abstract} {This study examines the tree models of mildly dissipative diffeomorphisms on the disk $\D$. These models are one-dimensional dynamical systems with ergodic aperiodic data as well as some properties of the original dynamics. The focus of this work is their quality as dynamical invariants in both the topological and ergodic sense.}
\end{abstract}

\maketitle

\section{Introduction}	
A classical way to study dynamical systems is by searching reduced models that capture the main features of the object of study. S. Crovisier and E. Pujals introduced a one-dimensional model in \cite{pujcrov} to study a family of surface diffeomorphisms that they labeled as strongly dissipative and was later renamed to mildly dissipative in \cite{CPT}. The current study examines this model from the perspective of a dynamical invariant. This inquiry is natural as well as interesting because these models are topological objects, yet they are built from an ergodic standpoint of differentiable maps.

The complexity of mildly dissipative diffeomorphisms exists in between one-dimensional dynamics and surface diffeomorphisms. Let us briefly recall how they are defined. Consider the compact disc $\D$ and given $ r \geq 1 $, we use $\emb(\D)$ to denote the space of $C^r$ embeddings of $\D$ into itself. We say $f\in \emb(\D)$ is dissipative if  $ |\det Df (x) |<1, \text{for all } x \in \D $. This condition implies that for every invariant measure, for almost every point $x$, there is a stable manifold $W^s(x)$. If the measure is not supported on a hyperbolic sink, then the stable manifold of these points must have dimension 1. Let us call $W^s_\D(x)$ the connected component of $W^s(x)\cap \D$ that contains $x$. We say that $f$ is mildly dissipative if it is dissipative and if $W^s_\D(x)$ splits the disk in two sets for every $x$ that has a one-dimensional stable manifold. We denote the family of such maps as $\text{MD}^r(\D)$.

A simple example of a mildly dissipative map is the classical construction of the horseshoe in the disc $\D$. Another family of examples is the Henon maps $H_{a,b}(x,y)=(1-ax^2+y,-bx)$ (introduced in \cite{Hen}) for certain parameters $a$ and $b$ (see \cite{pujcrov} for more details). Moreover, Boro\'nski and \v{S}timac in \cite{borosti} show mild dissipation for certain parameters of Lozi maps (introduced in \cite{Lozi}) $L_{a,b}(x,y)=(1+y-a|x|,bx)$ . 

Some of the main features of mildly dissipative diffeomorphisms (proved in \cite{pujcrov}) are as follows:
\begin{enumerate}
\item The interior of $\text{MD}^r(\D)$ is not empty. When the Jacobian is small enough, mild dissipation becomes a $C^1$-open property. 
\item The periodic points are dense in the support of every non-atomic ergodic measure. This result is obtained through a $C^\infty$ closing lemma for invariant measures without any perturbation. 
\end{enumerate}

We encourage the reader to refer to \cite{borosti}, \cite{borosti2}, \cite{pujcrov}, \cite{CP02}, \cite{CPT},  and \cite{CLPY} for a better understanding of mildly dissipative diffeomorphisms. 

To prove the second item of the previous statements, the authors define a real tree and a dynamical system on it as a reduced model of the former mildly dissipative map. Roughly speaking, the tree is defined by considering the quotient of the disk $\D$ by the curves $W^s_\D(x)$. Indeed, there are some technicalities to consider, and we discuss them in detail in section \ref{treeConstruction}.

Given a compact topological space $X$ and continuous map $f:X\to X$, we define $\M_1(f)$ as the set of invariant probability measures associated to $f$ and $\M_a(f)$ as the set of ergodic aperiodic measures of $f$. 

\begin{teo}[Crovisier--Pujals]\label{teoCP01}
Consider $f\in\text{MD}^r (\D) $, with $r>1$, such that $\M_a(f)\neq \emptyset$. Then, there exists a compact real tree $\X$, two continuous maps $\hat{f}:\X\to\X$, and  $\pi:\D\rightarrow \X$ that verify the following:
\begin{enumerate}
\item $\pi$ is a surjective semi-conjugacy between $f$ and $\hat{f}$, i.e., $\pi \circ f = \hat{f}\circ \pi$. 
\item The push-forward map induced by $\pi$, $\pi_*: \M_1(f)\to \M_1(\hat{f})$  is injective on $\M_a(f)$.
\end{enumerate}
\end{teo}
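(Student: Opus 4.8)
The plan is to build $\X$ as the quotient of $\D$ that collapses each stable leaf to a point. Concretely, let $\mathcal G$ be the decomposition of $\D$ whose elements are the sets $W^s_\D(x)$ (with the understanding, detailed in Section \ref{treeConstruction}, that the two-dimensional stable sets coming from hyperbolic sinks are handled separately), set $\X = \D/\mathcal G$ with the quotient topology, and let $\pi\colon\D\to\X$ be the canonical projection. Surjectivity and continuity of $\pi$ are then automatic, and $\X$ is compact as a continuous image of $\D$. The substantive point, and the place where mild dissipation is indispensable, is to verify that $\mathcal G$ is an \emph{upper semicontinuous} decomposition, so that $\X$ is Hausdorff (hence a compact metrizable space), and that $\X$ is a \emph{real tree}. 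Unique arcwise connectedness is precisely what the mild dissipation hypothesis buys us: since each leaf $W^s_\D(x)$ separates $\D$ into two components, the quotient cannot contain an embedded loop. I expect this topological verification to be the main obstacle of the construction.

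For the dynamics, recall that $f$ is an embedding of $\D$ into itself and that stable manifolds are invariant, so $f\bigl(W^s_\D(x)\bigr)\subseteq W^s_\D(f(x))$. Hence $f$ carries elements of $\mathcal G$ into elements of $\mathcal G$, and therefore descends to a map $\hat f\colon\X\to\X$ satisfying $\pi\circ f=\hat f\circ\pi$. Continuity of $\hat f$ follows from the universal property of the quotient map $\pi$, and surjectivity of $\pi$ has already been recorded; this proves item (1).

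For item (2), fix $\mu_1,\mu_2\in\M_a(f)$ with $\pi_*\mu_1=\pi_*\mu_2=:\nu$ and aim to show $\mu_1=\mu_2$. For each $i$ let $G_i\subseteq\D$ be the set of points that are forward-generic for $\mu_i$ in the sense of Birkhoff; ergodicity gives $\mu_i(G_i)=1$, whence the analytic set $\pi(G_i)$ satisfies $\nu(\pi(G_i))\ge\mu_i(G_i)=1$. Thus $\pi(G_1)\cap\pi(G_2)$ has full $\nu$-measure and is in particular nonempty, so we may pick $\theta$ in it together with points $x_1\in G_1$ and $x_2\in G_2$ having $\pi(x_1)=\pi(x_2)=\theta$. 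By construction $x_1$ and $x_2$ lie on the same fiber of $\pi$, that is, on the same stable leaf, so that $d\bigl(f^k(x_1),f^k(x_2)\bigr)\to0$ as $k\to\infty$. Given any continuous $\psi\colon\D\to\R$, uniform continuity forces $\psi(f^k x_1)-\psi(f^k x_2)\to0$, and passing to Cesàro averages the two Birkhoff limits coincide:
\[
\int\psi\,d\mu_1=\lim_{N}\frac1N\sum_{k=0}^{N-1}\psi(f^k x_1)=\lim_{N}\frac1N\sum_{k=0}^{N-1}\psi(f^k x_2)=\int\psi\,d\mu_2 .
\]
Since continuous functions determine a Borel measure on the compact space $\D$, this yields $\mu_1=\mu_2$, i.e.\ injectivity of $\pi_*$ on $\M_a(f)$.

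The role of aperiodicity is to guarantee that $\mu_i$ is not supported on a hyperbolic sink: combined with dissipativity and Oseledets--Pesin theory this ensures that $\mu_i$-almost every point has exactly one negative Lyapunov exponent and a genuinely one-dimensional stable manifold, so that its $\pi$-fiber really is a forward-contracting stable leaf $W^s_\D(x)$ --- exactly the property used in the displayed computation. With the construction of item (1) in hand, the ergodic argument above is short; accordingly I regard the real-tree structure of $\X$ and the upper semicontinuity of the leaf decomposition as the genuine difficulties, and the injectivity statement as the elegant but comparatively routine core.
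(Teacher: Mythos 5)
Your argument for item (2) is essentially the right one (and the one that works in the actual construction): two generic points in the same fiber lie on a common forward-contracting stable arc, so their Birkhoff averages against continuous functions coincide. The genuine gap is in item (1), which you correctly identify as the main obstacle but then do not resolve — and the specific route you propose, quotienting $\D$ by the raw decomposition into the leaves $W^s_\D(x)$, would in fact fail. Pesin stable manifolds depend only measurably on the point: the lengths of the arcs $W^s_\D(x)$ are not continuous (they are controlled only block by block, with constants degrading across blocks), so the family of all leaves is not an upper semicontinuous decomposition, and the quotient is not Hausdorff. Worse, many points of $\D$ (wandering points, points in basins of sinks) carry no one-dimensional stable leaf at all, so $\mathcal{G}$ is not even a well-defined decomposition without declaring those fibers to be singletons, which destroys Hausdorffness immediately. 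This is precisely the difficulty the Crovisier--Pujals construction is engineered to avoid: one does \emph{not} quotient by all leaves, but first builds, for each $\mu\in\M_a(f)$, a \emph{countable} family $\Gamma_{\mu,x}$ of stable arcs from the orbit of a single regular point $x$ (closed under taking connected components of preimages), whose crucial feature is that every arc is accumulated in the $C^1$ topology \emph{from both sides} by other arcs of the family (properties P2, P4). The tree $\X$ is then defined not as the leaf space but as the set of minimal classes of nested sequences of surfaces bounded by finitely many curves of $\Gamma$ and $\partial\D$; the two-sided accumulation is what makes this partition upper semicontinuous and each $\gamma\in\Gamma$ a point of $\X$, while the separation property from mild dissipation gives the real-tree structure.

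Note also that the object produced this way is genuinely different from the one you describe: fibers of $\pi$ need not be single leaves or singletons. In the horseshoe example of Section \ref{CSH}, the attracting fixed point $A$ and the saddle $p$ lie in the \emph{same} fiber (one equivalence class of nested surfaces), which is why $\pi_*$ fails to be injective on periodic measures. Your injectivity argument for aperiodic measures survives in the actual construction because property P4* guarantees that for $\mu$-almost every point the fiber really is the single arc $W^s_\D(x)$; but the construction of $\X$ itself, which is the substance of the theorem, cannot be carried out by the direct quotient you propose.
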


Our first contribution to the study of such objects is two extra properties.

\begin{prop}\label{PropProp}
Consider $f\in \text{MD}^r (\D)$ and $(\X,\hat{f})$ as in Theorem \ref{teoCP01}. Then,
\begin{enumerate}
\item	$\pi(\per(f))= \per(\hat{f})$
\item $\pi_*:\M_1(f)\to \M_1(\hat{f})$ is surjective. 
\end{enumerate}
\end{prop}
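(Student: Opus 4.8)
The plan is to treat the two items separately. For the first, the inclusion $\pi(\per(f))\subseteq\per(\hat f)$ is immediate from the semi-conjugacy of Theorem \ref{teoCP01}: iterating $\pi\circ f=\hat f\circ\pi$ gives $\pi\circ f^n=\hat f^n\circ\pi$, so $f^n(x)=x$ forces $\hat f^n(\pi(x))=\pi(x)$ and hence $\pi(x)\in\per(\hat f)$. The substance lies in the reverse inclusion $\per(\hat f)\subseteq\pi(\per(f))$, which I would establish fiberwise.

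Fix $\hat p\in\per(\hat f)$ of period $n$ and set $F=\pi^{-1}(\hat p)$, a nonempty compact subset of $\D$ since $\pi$ is a continuous surjection. From $\pi\circ f^n=\hat f^n\circ\pi$ together with $\hat f^n(\hat p)=\hat p$ one gets $\pi(f^n(y))=\hat p$ for every $y\in F$, that is $f^n(F)\subseteq F$: the map $f^n$ carries the fiber into itself. The key geometric input is the topology of the fibers coming from the construction in Section \ref{treeConstruction}: each fiber $\pi^{-1}(\hat x)$ is either a single point or (the closure of) a stable segment $W^s_\D$, hence an arc. In either case $F$ is a continuum with the fixed point property, so the continuous self-map $f^n|_F$ has a fixed point $x\in F$ by the intermediate value theorem; then $f^n(x)=x$ and $\pi(x)=\hat p$, yielding $\hat p\in\pi(\per(f))$. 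I expect confirming this fiber structure, and thereby the fixed-point property of $F$, to be the main obstacle; should the fibers turn out to be more complicated non-separating continua, one could instead invoke the Cartwright--Littlewood theorem applied to the fully invariant continuum $\bigcap_{k\ge 0}f^{nk}(F)$ (passing to $f^2$ if $f$ reverses orientation).

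For the second item I would lift a measure and then average. First, $\pi_*:\M_1(\D)\to\M_1(\X)$ is continuous and affine between the compact convex sets of probability measures, and it sends $\delta_y\mapsto\delta_{\pi(y)}$; since $\pi$ is surjective, its image contains every Dirac mass on $\X$, and being compact and convex it therefore contains the whole of $\M_1(\X)$ by Krein--Milman. Thus, given $\hat\mu\in\M_1(\hat f)$, I may choose some $\nu\in\M_1(\D)$ with $\pi_*\nu=\hat\mu$. Such a $\nu$ need not be $f$-invariant, so I would run the Krylov--Bogolyubov procedure: form the Cesàro averages $\nu_N=\frac1N\sum_{k=0}^{N-1}f^k_*\nu$ and let $\mu$ be a weak-$*$ limit of a subsequence, which is $f$-invariant. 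Using $\pi_*f^k_*=\hat f^k_*\pi_*$ (functoriality of the push-forward applied to the semi-conjugacy) and the $\hat f$-invariance of $\hat\mu$, each average satisfies $\pi_*\nu_N=\frac1N\sum_{k=0}^{N-1}\hat f^k_*\hat\mu=\hat\mu$; by continuity of $\pi_*$ this passes to the limit, giving $\pi_*\mu=\hat\mu$. Hence $\mu\in\M_1(f)$ projects onto $\hat\mu$ and $\pi_*$ is surjective. This part is essentially routine, the only delicate point being the existence of the initial lift $\nu$, which the Krein--Milman argument supplies.
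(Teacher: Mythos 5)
Your argument for item (2) is correct and is a genuinely different, arguably cleaner route than the paper's: you lift $\hat\mu$ to an arbitrary $\nu\in\M_1(\D)$ (the Krein--Milman argument for surjectivity of $\pi_*$ on all probability measures is fine) and then Ces\`aro-average; this works for every invariant measure at once and does not use item (1). The paper instead uses convexity of $\pi_*$ together with item (1) to reduce to aperiodic ergodic measures, and then pushes forward an empirical limit along a generic point of $\hat\mu$ lifted through $\pi$. Both are valid.

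For item (1), however, your primary argument rests on a false description of the fibers. They are \emph{not} in general points or arcs: in the paper's own horseshoe example the fiber over the endpoint of $\X_f$ is a two-dimensional region containing both the attracting fixed point $A$ and the hyperbolic fixed point $p$ (this is exactly why the paper remarks that $\delta_A$ and $\delta_p$ have the same image under $\pi_*$). So the ``arc has the fixed point property'' step does not apply, and you cannot replace it by a bare fixed-point property for general non-separating plane continua either --- that is the open plane fixed point problem. Your fallback via Cartwright--Littlewood is the version that can be made to work: the fibers are nested intersections of compact connected surfaces bounded by arcs of $\Gamma$ and $\partial\D$, hence non-separating continua, $C=\bigcap_{k\ge 0}f^{nk}(F)$ is a nonempty invariant non-separating continuum, and after extending $f^{n}$ (or $f^{2n}$ in the orientation-reversing case, or invoking Bell's theorem) to a homeomorphism of the plane one gets a fixed point in $C\subseteq F$. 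This is correct but requires you to actually verify the non-separation and the extension, neither of which is free. The paper avoids all of this plane topology with a soft ergodic argument: the union $K$ of the fibers over the finite orbit of $\hat p$ satisfies $f(K)\subseteq K$, hence supports an ergodic measure; that measure cannot be aperiodic because the support of an aperiodic measure contains infinitely many points separated by curves of $\Gamma$, whereas $\pi(K)$ is finite; so the measure is carried by a periodic orbit inside $K$, which projects onto the orbit of $\hat p$. You should either adopt that argument or supply the missing topological verifications for Cartwright--Littlewood; as written, the primary version of your item (1) does not go through.
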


We would like to make a few comments. First, in \cite{borosti}, for Wang--Young parameters in the Henon family (see \cite{WY}) and for the Misiurewicz--\v{S}timac parameters of Lozi maps (see \cite{MS}), the authors show that branching points are dense and the trees seem to have a fractal structure. Second, $\pi_*$ might not be a bijection. We do have that it is surjective, injective on $\M_a(f)$, $\pi(\per(f))= \per(\hat{f})$, and $\pi_*$ is a convex map. However, it may fail to be injective in the set of periodic measures, and there are simple examples of this.
	
We now proceed to study the induced tree of a mildly dissipative map as a dynamical invariant.

\begin{quest} \label{QuestInv} Given $f,g\in \text{MD}^r (\D)$, consider $(\X_f,\hat{f})$ and $(\X_g,\hat{g})$ associated to $f$ and $g$ respectively. If the dynamics of $f$ and $g$ are equivalent, are the dynamics of $\hat{f}$ and $\hat{g}$ also equivalent? 
\end{quest}

We call a tree model of $f\in \text{MD}^r (\D)$ to any triple $(\X,\pi:\D\to \X, k:\X\to \X)$ such that
\begin{enumerate}
\item $\pi$ is a surjective semi-conjugacy between $f$ and $k$, and
\item  $\pi$ induces a map $\pi_*: \M_1(f)\to \M_1(k)$ that is surjective and injective on $\M_a(f)$.
\end{enumerate}

\begin{quest}\label{QuestTreeUni} Given $f\in \text{MD}^r (\D)$, is there a unique tree model? 
\end{quest}

To avoid confusion, when considering $(\X,\pi,\hat{f})$ or just $(\X,\hat{f})$, we are referring to the tree model constructed in Theorem \ref{teoCP01}  (and not a general one), and we call it the natural tree model of $f$.

We study the first question in a topological sense, and its answer depends on where the conjugacy is defined. If there exists a homeomorphism $h:\D\to \D$ that conjugates $f$ and $g$, then the answer is yes. The map $h$ induces a homeomorphism from $\X_f$ to $\X_g$ that conjugates $\hat{f}$ with $\hat{g}$. 

\begin{prop}\label{ConjDisco}
	Let $f$  and $g$ in  $ \text{MD}^r (\D)$ and suppose there exists a homeomorphism $h:\D\to \D$ that verifies $g\circ h = h \circ f$. Then, there exists a homeomorphism $\hat h:\X_f\to \X_g$ such that $\hat g\circ \hat h = \hat h \circ \hat f$. 
\end{prop}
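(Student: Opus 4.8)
The plan is to descend the conjugacy $h$ to a homeomorphism of the quotient spaces, the key point being that $h$ carries the partition defining $\X_f$ onto the one defining $\X_g$.

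First I would recall from Section \ref{treeConstruction} that $\X_f$ is the quotient of $\D$ by the partition whose atoms are the curves $W^s_\D(x)$, with $\pi_f$ the quotient projection, and similarly for $g$. The first and central step is to prove that $h$ respects these partitions, i.e. $h\big(W^s_\D(x)\big)=W^s_\D\big(h(x)\big)$ for every $x\in\D$. Since $\D$ is compact, $h$ and $h^{-1}$ are uniformly continuous; combined with $h\circ f=g\circ h$ (hence $h\circ f^{n}=g^{n}\circ h$), this gives, for all $x,y\in\D$,
\[
d\big(f^{n}(x),f^{n}(y)\big)\xrightarrow[n\to\infty]{}0 \iff d\big(g^{n}(h(x)),g^{n}(h(y))\big)\xrightarrow[n\to\infty]{}0 .
\]
Thus $h$ maps the stable set of $x$ under $f$ onto the stable set of $h(x)$ under $g$, and being a homeomorphism of $\D$ it preserves the connected components of their intersections with $\D$; therefore it sends $W^s_\D(x)$ onto $W^s_\D(h(x))$.

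With the partitions matched, I would define $\hat h\colon\X_f\to\X_g$ by $\hat h(\pi_f(x)):=\pi_g(h(x))$. This is well defined and injective: $\pi_f(x)=\pi_f(x')$ means $x,x'$ lie on a common curve, so by the previous step $h(x),h(x')$ do as well, whence $\pi_g(h(x))=\pi_g(h(x'))$; the converse follows by applying the same argument to $h^{-1}$. Surjectivity of $\hat h$ is inherited from that of $h$ and of $\pi_g$ (Theorem \ref{teoCP01}). Continuity is immediate from the universal property of the quotient topology, since $\hat h\circ\pi_f=\pi_g\circ h$ is continuous; running the argument for $h^{-1}$ shows $\hat h^{-1}$ is continuous, so $\hat h$ is a homeomorphism. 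The conjugacy relation is then a direct computation: for any $x\in\D$,
\begin{align*}
\hat h\big(\hat f(\pi_f(x))\big)
&=\hat h\big(\pi_f(f(x))\big)=\pi_g\big(h(f(x))\big)=\pi_g\big(g(h(x))\big)\\
&=\hat g\big(\pi_g(h(x))\big)=\hat g\big(\hat h(\pi_f(x))\big),
\end{align*}
and since $\pi_f$ is surjective we conclude $\hat g\circ\hat h=\hat h\circ\hat f$.

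I expect the main obstacle to be the first step: verifying that a conjugacy $h$, which is only assumed to be a homeomorphism, carries the partition of $\X_f$ onto that of $\X_g$. The delicate point is that the atoms $W^s_\D(x)$ are introduced through the differentiable and ergodic structure, while $h$ need not be smooth, so the contraction rates along stable manifolds are not a priori preserved. The resolution is to identify each atom with (the connected component in $\D$ of) the purely topological stable set, which is manifestly invariant under any topological conjugacy; making this identification rigorous for the partition of Section \ref{treeConstruction} is where the real work lies.
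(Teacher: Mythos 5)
Your overall strategy is the paper's: show that $h$ carries the curve family of $f$ onto that of $g$, then descend $h$ to the quotients via the relation $\hat h\circ\pi_f=\pi_g\circ h$. Your first step is essentially right, and for the right reason: the stable sets admit the purely topological characterization $W^s(x)=\{y:\lim_n d(f^n(y),f^n(x))=0\}$ (item (4) of Theorem \ref{teoStaMan}), so uniform continuity of $h$ and $h^{-1}$ gives $h(W^s(x))=W^s(h(x))$, and, being a homeomorphism of $\D$ with $h(f(\D))=g(\D)$, $h$ matches connected components of $W^s(x)\cap\D$ with those of $W^s(h(x))\cap\D$ and is compatible with the inductive construction of the families $\Gamma_i$. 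One point you skip: $\Gamma_{\nu,h(x)}$ is built from a \emph{regular} point of $\nu=h_*\mu$, and regularity (membership in Pesin blocks) is not a topological notion, so $h(x)$ need not be regular; the paper fixes this by replacing $R_{f,\mu}$ with $R_{f,\mu}\cap h^{-1}(R_{g,\nu})$, still of full measure, and using property P4 to see that the tree does not depend on the choice of base point.

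The genuine gap is in your well-definedness and injectivity argument. You assert that $\pi_f(x)=\pi_f(x')$ means $x$ and $x'$ lie on a common curve. That is false for the partition of section \ref{treeConstruction}: its atoms are the minimal classes of nested sequences of surfaces bounded by curves of $\Gamma_f$, and such an atom can be a two-dimensional region meeting no curve of $\Gamma_f$ at all — in the horseshoe, the sink $A$ and the saddle $p$ lie in the same atom. For such atoms your argument says nothing, and this is precisely why $\pi_*$ fails to be injective on periodic measures. The correct statement is only that no curve of $\Gamma_f$ separates $x$ from $x'$, and the paper closes the case by contradiction: if some $\gamma\in\Gamma_g$ separated $h(x)$ from $h(x')$, then $h^{-1}(\gamma)$, which belongs to $\Gamma_f$ by the first step, would separate $x$ from $x'$. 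With that replacement (and the symmetric one for $h^{-1}$, which gives injectivity) your proof coincides with the paper's. Your closing worry, by contrast, is a non-issue: the topological characterization of $W^s$ already disposes of the smoothness concern.
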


In \cite{borosti2} (a work contemporary to this one), a similar result is proved for the Wang--Young parameters in the Henon family.

The condition of $h$ to be defined in the whole disk is too restrictive. This hypothesis gives information on the dynamics in the wandering set and in particular implies that stable manifolds go into stable manifolds. However, concerning our second question, we observe that none of the mentioned properties are related to the wandering set. Therefore, it is natural to wonder what happens when $h$ is only defined from the maximal invariant set of $f$ to the maximal invariant set of $g$. 

\begin{ex}\label{contra}
There exists $f$  and $g$ in  $ \text{MD}^r (\D)$ such that, if $\Lambda_f$ and $\Lambda_g$ are the maximal invariant sets of $f$ and $g$ respectively in $\D$, then,
\begin{enumerate} 
\item there exists a homeomorphism $h:\Lambda_f\to \Lambda_g$ that conjugates $f_{|_{\Lambda_f}}$ with $g_{|_{\Lambda_g}}$,
\item $\X_f$ is an interval, and
\item $\X_g$ has one point of index $3$. 
\end{enumerate}
\end{ex} 

Since $\X_f$ and $\X_g$ cannot be homeomorphic, the first question gets a negative answer when the conjugacy is only defined on the maximal invariant set. In this example, although $\X_f$ and $\X_g$ are not the same tree, continuous and surjective semi-conjugacies can be constructed in both directions. The existence of these semi-conjugacies is the best one can hope for, and our main theorem states that this always happens. 

Before enunciating our first theorem, some troublesome wandering dynamics of $f$ must be removed.  Let $f$ be a mildly dissipative map and $\Lambda_f$ its maximal invariant set. Then, we say that $f$ is stable efficient if $\pi(\Lambda_f) = \X_f$. A second hypothesis we need for $f$ to verify is what we call to have finite ergodic covering, and it means that we can construct the tree $\X_f$ using only a finite number of ergodic aperiodic measures. However, we would like to point out that this hypothesis does not imply the set $\M_a(f)$ to be finite. A precise definition of this concept is provided in section \ref{treeConstruction}.

\begin{teoprin}\label{TeoSemi}
Let $f,g$ in  $\text{MD}^r (\D)$ be stable efficient with  finite ergodic covering and $\Lambda_f$ and $\Lambda_g$ be the maximal invariant sets of $f$ and $g$ respectively in $\D$. Consider $(\X_f,\hat f)$ and $(\X_g, \hat g)$ the natural tree models associated to $f$ and $g$. Suppose there exists a homeomorphism $h:\Lambda_f\to \Lambda_g$ verifying $h\circ f_{|_{\Lambda_f}} = g_{|_{\Lambda_g}} \circ h$. Then, there exist two continuous surjective maps $\hat h_1:\X_f\rightarrow \X_g$ and $\hat h_2:\X_g\rightarrow \X_f$ such that $\hat h_1\circ\hat{f}=\hat{g}\circ \hat h_1$ and  $\hat h_2\circ\hat{g}=\hat{f}\circ \hat h_2$.  	
\end{teoprin}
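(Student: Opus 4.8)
The two conclusions are symmetric: applying the construction of $\hat h_1$ to the conjugacy $h^{-1}\colon\Lambda_g\to\Lambda_f$ produces $\hat h_2$. So the plan is to build $\hat h_1$ and then invoke symmetry. Write $\pi_f\colon\D\to\X_f$ and $\pi_g\colon\D\to\X_g$ for the two projections. The natural first candidate is $\Phi=\pi_g\circ h\colon\Lambda_f\to\X_g$, which is continuous and, since $h$ conjugates $f$ with $g$ and $\pi_g$ is a semi-conjugacy, equivariant:
\[
\Phi\circ f_{|_{\Lambda_f}} = \pi_g\circ g_{|_{\Lambda_g}}\circ h = \hat g\circ\Phi .
\]
Because $g$ is stable efficient, $\Phi(\Lambda_f)=\pi_g(\Lambda_g)=\X_g$, so $\Phi$ is already a continuous equivariant surjection onto $\X_g$. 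The entire difficulty is that $\Phi$ need not factor through $\pi_f$: as Example \ref{contra} shows, $h$ does not carry the fibres of $\pi_f$ (the stable leaves) to the fibres of $\pi_g$, so the naive quotient of $\Phi$ is ill defined. Indeed, were it well defined, the symmetric construction would make $\hat h_1$ and $\hat h_2$ mutually inverse homeomorphisms, contradicting that example.

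The correction uses the finite ergodic covering. Fix ergodic aperiodic measures $\mu_1,\dots,\mu_n\in\M_a(f)$ whose supports generate $\X_f$, in the sense (to be made precise in Section \ref{treeConstruction}) that every point of $\X_f$ lies on an arc joining two points of $\bigcup_i\pi_f(\supp\mu_i)$; by Theorem \ref{teoCP01} the measures $h_*\mu_1,\dots,h_*\mu_n\in\M_a(g)$ play the same role for $\X_g$. The key technical step I would isolate is a \emph{claim}: for each $i$, the restriction of the stable equivalence to $\supp\mu_i$, together with its internal tree order (betweenness), is detected purely by the dynamics of $f_{|_{\Lambda_f}}$ — two points of the support share a fibre of $\pi_f$ exactly when their forward orbits are asymptotic and no intermediate stable leaf separates them, a condition preserved by any conjugacy. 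Granting the claim, $h$ matches fibres of $\pi_f$ inside $\supp\mu_i$ with fibres of $\pi_g$ inside $\supp h_*\mu_i$, so $\Phi$ descends on each support to a well-defined map
\[
\hat h_1\colon T_i\longrightarrow T_i',
\]
where $T_i\subseteq\X_f$ and $T_i'\subseteq\X_g$ are the subtrees spanned by $\pi_f(\supp\mu_i)$ and $\pi_g(\supp h_*\mu_i)$; this map is a homeomorphism onto $T_i'$ and commutes with $\hat f,\hat g$.

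It then remains to extend $\hat h_1$ over the finitely many arcs of $\X_f$ joining the subtrees $T_i$. These arcs are swept out by wandering orbits and carry no aperiodic measure, which is precisely why $h$ imposes no rigidity there. Since $\X_g$ is a real tree, hence uniquely arcwise connected, each connecting arc $[\xi,\xi']$ can be mapped to the geodesic segment $[\hat h_1(\xi),\hat h_1(\xi')]$ by a monotone, possibly collapsing, map matched at its endpoints with the values already defined on the $T_i$. This is the freedom that lets $\hat h_1$ be surjective yet non-injective and that reconciles the differing global shapes of $\X_f$ and $\X_g$ — for instance folding an interval onto a branch point, as in Example \ref{contra}. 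Continuity at the junctions is automatic because we are gluing continuous maps on the closed pieces of a finite subtree; surjectivity holds because the image already contains every $T_i'$ together with the arcs between them, which generate $\X_g$; and equivariance persists because $\hat f$ and $\hat g$ permute the finite families of subtrees and connecting arcs compatibly with $h$, so the segment maps can be chosen consistently along orbits.

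I expect the main obstacle to be the \emph{claim} of the second paragraph — that the fibre structure and internal betweenness of a single ergodic support are dynamical invariants transported by $h$ — together with the equivariant choice of the collapsing maps along the wandering arcs. The first requires a dynamical characterization of when a stable leaf separates two orbit points, showing it depends only on $f_{|_{\Lambda_f}}$; the second requires choosing the arc maps so that they intertwine the way $\hat f$ contracts and permutes the connecting arcs with the corresponding action of $\hat g$. The finite ergodic covering hypothesis is what keeps the latter a finite, orbit-by-orbit bookkeeping rather than an uncountable one, and the stable efficiency of $f$ and $g$ is what guarantees, respectively, that the $T_i$ and their joining arcs exhaust $\X_f$ and that the image exhausts $\X_g$.
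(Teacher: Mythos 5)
The architecture of your argument rests on the claim in your second paragraph, and that claim is false; it is contradicted by Example \ref{contra} itself. You assert that two points of $\supp\mu_i$ lie in the same fibre of $\pi_f$ exactly when their forward orbits are asymptotic and no intermediate stable leaf separates them, that this is ``a condition preserved by any conjugacy,'' and hence that $h$ matches fibres with fibres so that $\Phi=\pi_g\circ h$ descends to a \emph{homeomorphism} $T_i\to T_i'$. But separation by a stable leaf is not intrinsic to $(\Lambda_f,f_{|\Lambda_f})$: a fibre of $\pi_f$ is a connected component of $W^s(y)\cap\D$, and connectedness depends on the ambient disk, not on the restricted dynamics. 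In Example \ref{contra} the two maps have the \emph{same} restriction to the maximal invariant set (one may take $h$ to be the identity up to the identification of $D$ with $\D$), yet half of the stable leaves of $f$ are cut into two leaves of $g$; a single fibre of $\pi_f$ meeting $\supp\mu$ is sent by $h$ onto the union of two distinct fibres of $\pi_g$. Consequently the subtree spanned by $\pi_f(\supp\mu)$ is an interval while the one spanned by $\pi_g(\supp h_*\mu)$ is a tripod, and no homeomorphism $T_i\to T_i'$ can exist. Your final paragraph flags this claim as ``the main obstacle,'' but the obstacle is not that it is hard to prove --- it is not true, and with it the reduction to gluing along finitely many wandering arcs collapses. (That reduction also misreads the finite ergodic covering hypothesis, which says that the projections of finitely many families $\Gamma_{\mu_i,x_i}$ are \emph{dense} in $\X_f$, not that $\X_f$ decomposes into finitely many ergodic subtrees joined by wandering arcs.)

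What is actually true, and what the paper proves (Lemma \ref{LemCurvas}, then Theorem \ref{TeoSemimu}), is an asymmetric inclusion rather than a matching of fibres: after replacing $\D$ by $f^k(\D)$ for $k$ large (which changes the tree only up to homeomorphism, Lemma \ref{LemTreeK}), every curve $\gamma\in\Gamma^k_{\mu,x}$ satisfies $h(\gamma\cap\Lambda_f)\subset\tilde\gamma$ for some $\tilde\gamma\in\Gamma_{\nu,h(x)}$. This uses the uniform continuity of $h$ on the compact set $\Lambda_f$ together with the fact that the local stable segments $W^s_{f^k(\D)}(f^n(x))$ have length tending to $0$ uniformly in $n$ as $k\to\infty$, so their $h$-images land inside a single local stable manifold of $g$ and hence inside a single component of $W^s\cap\D$. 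The resulting map $\Gamma_{\mu,x}\to\Gamma_{\nu,h(x)}$ is only a semi-conjugacy at the level of fibres (possibly many-to-one); it is extended to all of $\X_{f,\mu}$ by pushing forward the nested sequences of surfaces that define the points of the tree, surjectivity coming from stable efficiency of $g$, and Theorem \ref{TeoSemi} follows by choosing one $k$ that works simultaneously for the finitely many measures of the ergodic covering. The map in the other direction is obtained by exchanging the roles of $f$ and $g$; the two maps are not inverse to each other, which is exactly what Example \ref{contra} forces. Your first paragraph (the definition of $\Phi$, its equivariance and surjectivity, and the observation that it need not factor through $\pi_f$) is correct and is the right starting point, but the descent of $\Phi$ to $\X_f$ must go through this iteration-and-inclusion argument, not through a fibre-to-fibre correspondence.
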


Since both maps in Example \ref{contra} are stable efficient and have finite ergodic covering, applying the previous theorem, we conclude that  $(\X_f,\hat f)$ and $(\X_g, \hat g)$ are tree models of the same map. Therefore, we conclude a negative answer to our second question. 

 We observe that the stable efficiency property is a necessary condition, and our next proposition shows that it is not restrictive in a meaningful way. 

\begin{prop}\label{PropStabEff}
Let $(f,\D)$ be a mildly dissipative map and $\Lambda_f$ its maximal invariant set. If $f$ is not stable efficient, then there exists $D\subset \D$ that is homeomorphic to $\D$ such that:
\begin{enumerate}
\item $(f_{|_D}, D)$ is mildly dissipative and stable efficient. 
\item The maximal invariant set of $f$ in $D$ is  the maximal invariant set of $f$ in $\D$.
\item The tree associated to $f_{|_D}$ is the tree $\pi(\Lambda_f)$.   
\end{enumerate}
\end{prop}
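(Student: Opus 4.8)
The plan is to realise the sub-disk $D$ as the preimage under $\pi$ of the portion of the tree coming from the maximal invariant set, and then to reduce all three conclusions to a single topological statement: that this preimage is a disk. Write $\Lambda_f=\bigcap_{n\ge 0}f^n(\D)$, so that $f|_{\Lambda_f}$ is a homeomorphism, and set $T:=\pi(\Lambda_f)\subseteq\X_f$. I would first record that $T$ is a compact $\hat f$-invariant subtree: invariance is immediate from $\hat f(T)=\hat f(\pi(\Lambda_f))=\pi(f(\Lambda_f))=\pi(\Lambda_f)=T$, while connectedness is where the construction of Section~\ref{treeConstruction} enters, since the non-laminated complementary regions are collapsed and the image of the maximal invariant set is a genuine subtree rather than a Cantor-like set. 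The candidate is then
\[
D:=\pi^{-1}(T)=\pi^{-1}\big(\pi(\Lambda_f)\big).
\]

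Next I would dispatch the parts of the statement that are purely formal. Forward invariance holds because $\pi(f(D))=\hat f(\pi(D))=\hat f(T)=T$, whence $f(D)\subseteq\pi^{-1}(T)=D$; and since $\Lambda_f\subseteq D\subseteq\D$ with $f(D)\subseteq D$, the maximal invariant set of $f|_D$ is $\bigcap_n f^n(D)$, which is squeezed between $\bigcap_n f^n(\Lambda_f)=\Lambda_f$ and $\bigcap_n f^n(\D)=\Lambda_f$, yielding item~(2). The decisive observation for items~(1) and~(3) is that $D$ is \emph{leaf-saturated}: if $x\in D$ then the whole leaf $W^s_\D(x)$ lies in $\pi^{-1}(\pi(x))\subseteq\pi^{-1}(T)=D$, so that $W^s_D(x)=W^s_\D(x)$ and no leaf is truncated in passing to $D$. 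Consequently the leaf-quotient of $(D,f|_D)$ equals $\pi(D)=T$ exactly, and since $\pi(\Lambda_f)=T$ as well, $f|_D$ is stable efficient with tree $\pi(\Lambda_f)$, which is item~(3). Mild dissipation of $f|_D$ then costs almost nothing: the determinant bound is inherited verbatim, and, once $D$ is known to be a disk, each $W^s_D(x)=W^s_\D(x)$ is a proper arc of $D$ with endpoints on $\partial D$, hence splits $D$ by Jordan's theorem.

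Everything therefore rests on the one genuinely geometric step, which I expect to be the \textbf{main obstacle}: proving that $D=\pi^{-1}(T)$ is homeomorphic to the disk. The structure to exploit is that $\X_f\setminus T$ is an open subset of a real tree, hence a disjoint union of open subtrees $U_\alpha$, each attached to $T$ at a single point $p_\alpha$; its preimage $\D\setminus D=\bigsqcup_\alpha\pi^{-1}(U_\alpha)$ is a family of open ``caps'', and the frontier of each cap lies in the single separating leaf $L_\alpha:=\pi^{-1}(p_\alpha)$ together with an arc of $\partial\D$. Thus $D$ is $\D$ with a family of caps removed, each bounded by a disjoint, properly embedded separating arc, and each removal is a ``bite'' taken from the boundary, so connectedness and simple connectedness are preserved. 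Removing one cap plainly returns a disk; the difficulty is that there may be infinitely many caps which accumulate, so the real content is to show that the frontier of $D$ is still a simple closed curve and that no wild, non-locally-connected boundary is created.

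To control this I would use the compactness of $T$ together with the continuity of the stable lamination to force the diameters of any accumulating family of caps to shrink to zero; the frontier of $D$ is then obtained from $\partial\D$ by replacing each swallowed boundary arc with its leaf $L_\alpha$, and the shrinking guarantees that this replacement is a continuous simple closed curve. An application of the Schoenflies theorem then gives $D\cong\D$. With $D$ established as a disk, the reductions of the previous paragraph deliver the three itemized conclusions. I would finally remark that branch points of $T$ pose no difficulty, since the ambient disk $\D$ already realises a branching tree (as in the index-$3$ point of $\X_g$ in Example~\ref{contra}): such points remain interior manifold points of $D$, so the leaf-saturated set needs no further modification.
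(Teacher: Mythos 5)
Your candidate $D=\pi^{-1}(\pi(\Lambda_f))$ is exactly the set the paper uses, and the overall route --- forward invariance, squeezing the maximal invariant set between $\bigcap_n f^n(\Lambda_f)$ and $\bigcap_n f^n(\D)$, leaf-saturation of $D$, and a separate geometric step establishing that $D$ is a disk --- is the paper's route. On the disk step you are in fact more explicit than the paper, which identifies $D$ with the intersection of the nested compact surfaces bounded by the non-splitting curves of $\Gamma$ and simply declares the disk property clear; your cap-removal plus shrinking-diameter plus Schoenflies discussion is a legitimate way to fill that in.

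The one place where your argument is genuinely thinner than it needs to be is the identification of the tree of $f_{|_D}$ in items (1) and (3). The tree associated to $(f_{|_D},D)$ is not defined as the quotient of $D$ by all stable leaves: it is built from the specific inductive family $\tilde\Gamma$ of curves (the components of $W^s(f^n(x))\cap D$ through regular points, together with the preimage components that meet $f(D)$). So ``the leaf-quotient of $(D,f|_D)$ equals $\pi(D)=T$'' is an assertion, not a proof; what must be checked is that $\tilde\Gamma$ coincides with $\Gamma^+=\{\gamma\in\Gamma:\ \gamma\ \text{splits}\ \Lambda_f\}$. The paper does this by first proving the lemma that stable efficiency is equivalent to every curve of $\Gamma$ splitting $\Lambda_f$, then observing that every splitting curve meets $\Lambda_f$ (a connected set meeting both a complementary component and its complement meets the separating curve), hence meets $f(D)$ and therefore survives the inductive construction, while no new curves can appear since $\tilde\Gamma\subset\{\gamma\in\Gamma:\gamma\subset D\}$. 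This verification is the actual content behind your leaf-saturation remark and should be supplied; with it, your proof closes.
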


To prove Theorem \ref{TeoSemi}, for each measure $\mu \in \M _a(f)$, we construct a tree $\X_{f,\mu}$ and an induced dynamics $\hat f_\mu$ such that there exists a continuous projection $\pi_\mu:\D\to \X_{f,\mu}$ verifiying $ \pi_\mu \circ  f = \hat f_\mu \circ \pi_\mu$.   We call $(\X_{f,\mu},\hat f_\mu)$ the natural tree model associated to $(f,\mu)$  (see section \ref{treeConstruction} for more details). The finite ergodic covering property means that a finite number of these trees contain all the dynamical information of $(\X_f,\hat f)$. Then, theorem  \ref{TeoSemi} is a consequence of the following result. 

\begin{teoprin}\label{TeoSemimu}
Let $f,g$ in  $\text{MD}^r (\D)$ be stable efficient, $\Lambda_f$ and $\Lambda_g$ be the maximal invariant sets of $f$ and $g$ respectively in $\D$. Suppose there exists a homeomorphism $h:\Lambda_f\to \Lambda_g$ such that $h\circ f_{|_{\Lambda_f}} = g_{|_{\Lambda_g}} \circ h$. Consider $\mu \in \M_a(f)$, $\nu=h_*(\mu)$,  $(\X_{f,\mu},\hat f_\mu)$ and $(\X_{g,\nu}, \hat g_\nu)$ the natural tree models associated to $(f,\mu)$ and $(g,\nu)$. Then, there exists two continuous surjective maps $\hat h_1:\X_{f,\mu}\rightarrow \X_{g,\nu}$ and $\hat h_2:\X_{g,\nu}\rightarrow \X_{f,\mu}$ such that $\hat h_1\circ\hat{f}_\mu=\hat{g}_\nu\circ \hat h_1$ and  $\hat h_2\circ\hat{g}_\nu=\hat{f}_\mu\circ \hat h_2$.  	
\end{teoprin}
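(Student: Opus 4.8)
The plan is to separate the rigid measurable and dynamical matching, which $h$ provides for free, from the topological matching of the two trees, which is genuinely delicate. First I would use the defining property of the construction in section \ref{treeConstruction}: for $z,z'\in\Lambda_f$ one has $\pi_\mu(z)=\pi_\mu(z')$ exactly when $z,z'$ lie on a common stable leaf, which for points of the maximal invariant set means $d(f^nz,f^nz')\to 0$. Since $h$ is a homeomorphism of the compact set $\Lambda_f$ onto $\Lambda_g$ with $h\circ f=g\circ h$, both $h$ and $h^{-1}$ are uniformly continuous, so this forward-asymptotic relation is preserved in both directions. Together with stable efficiency, which gives $\pi_\mu(\Lambda_f)=\X_{f,\mu}$ and $\pi_\nu(\Lambda_g)=\X_{g,\nu}$, this yields a bijection $\bar h:\X_{f,\mu}\to\X_{g,\nu}$ determined by $\bar h\circ\pi_\mu=\pi_\nu\circ h$ on $\Lambda_f$; by construction $\bar h$ conjugates $\hat f_\mu$ with $\hat g_\nu$ and satisfies $\bar h_*(\pi_{\mu*}\mu)=\pi_{\nu*}\nu$.

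The obstruction, made explicit by Example \ref{contra}, is that $\bar h$ need not be continuous: a continuous bijection between compact (hence Hausdorff) trees would be a homeomorphism, and the two trees can fail to be homeomorphic. Thus the maps $\hat h_1,\hat h_2$ we seek are \emph{not} $\bar h$ and $\bar h^{-1}$, and I expect both of those to be discontinuous. The idea is instead to realize both trees as projections of one compact invariant correspondence. Setting $\Gamma=\overline{\{(\pi_\mu(z),\pi_\nu(h(z))):z\in\Lambda_f\}}\subseteq \X_{f,\mu}\times\X_{g,\nu}$, the relation $h\circ f=g\circ h$ together with the semi-conjugacy property of $\pi_\mu,\pi_\nu$ makes $\Gamma$ invariant under $\hat f_\mu\times\hat g_\nu$; stable efficiency and compactness make the coordinate projections $p_1:\Gamma\to\X_{f,\mu}$ and $p_2:\Gamma\to\X_{g,\nu}$ surjective, and they are automatically continuous semi-conjugacies onto the two factors.

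To descend from $\Gamma$ to \emph{direct} maps between the trees I would pursue a finite-approximation argument rather than trying to invert a projection, since the fibers of $p_1$ and $p_2$ are exactly where $\bar h$ degenerates and are generally nondegenerate. The plan is to approximate each tree by the finite graphs $\X^{(n)}_{f,\mu}$ obtained from the first $n$ stable cuts of a generating subset for $\mu$ (with the bonding maps refining the cuts), transport the combinatorics through $h$ up to the reshuffling of the separation order of the leaves, and at each level construct folding maps of graphs that are surjective, compatible with the bonding maps, and intertwine the level-$n$ dynamics. Passing to the inverse limit then produces continuous surjective $\hat h_1,\hat h_2$; surjectivity is inherited from $p_1,p_2$ together with the density of $\pi_\mu(\Lambda_f)$, and the intertwining relations $\hat h_1\circ\hat f_\mu=\hat g_\nu\circ\hat h_1$ and $\hat h_2\circ\hat g_\nu=\hat f_\mu\circ\hat h_2$ survive the limit because they hold at every finite level.

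I expect the main obstacle to be precisely this combinatorial matching across the branch locus. The difficulty is that the way stable leaves separate the disk, and therefore the branching of the tree, is not transported by $h$ (this is the entire content of Example \ref{contra}), so the finite models of $\X_{f,\mu}$ and $\X_{g,\nu}$ are never isomorphic as ordered graphs, only as dynamical graphs after forgetting the embedding order. The crux is thus to build, at each level, folding maps that are simultaneously equivariant for the level-$n$ dynamics and coherent with the bonding maps, absorbing the order discrepancy into controlled folds over the newly created branch points; once such a coherent tower is produced, continuity of $\hat h_1,\hat h_2$ comes for free from the inverse limit. I would treat the continuity across branch points as the step requiring the most care, with the measurable skeleton supplied by $\bar h$ serving only to guarantee that the two towers describe the same underlying measured system.
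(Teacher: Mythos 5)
Your opening step is not correct as stated. For points of $\Lambda_f$, the fiber relation of $\pi_\mu$ is \emph{not} the forward-asymptotic relation: two points project to the same element of $\X_{f,\mu}$ iff no curve of $\Gamma_{\mu,x}$ separates them, and which connected components of $W^s\cap\D$ occur in $\Gamma_{\mu,x}$ depends on the ambient disk, not only on $(\Lambda_f,f_{|\Lambda_f})$. Example \ref{contra} is a direct counterexample to your claim: there $h$ is essentially the identity on the horseshoe, yet two points lying on one component of $W^s_\D(\cdot)\cap\D$ (hence identified by $\pi_f$) can lie on two different components of $W^s(\cdot)\cap D$ and be separated by $\pi_g$. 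So the map $\bar h$ with $\bar h\circ\pi_\mu=\pi_\nu\circ h$ is not merely discontinuous -- it is not well defined -- and the correspondence $\Gamma$ you build from its graph, while legitimate, does not by itself yield maps between the trees.

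The genuine gap is in your third paragraph: the ``folding maps'' at each finite level are exactly the content of the theorem, and you give no mechanism for producing them. The paper's mechanism (Lemma \ref{LemCurvas}) is the missing idea: replace $f$ by its restriction to $f^k(\D)$ (which has a tree model homeomorphically conjugate to the original, Lemma \ref{LemTreeK}); by uniform contraction on hyperbolic blocks the curves of $\Gamma^k_{\mu,x}$ become uniformly short as $k$ grows, and by uniform continuity of $h$ on the compact set $\Lambda_f$ together with the stable manifold theorem for $(g,\nu)$, each such curve satisfies $h(\gamma\cap\Lambda_f)\subset\tilde\gamma$ for a single $\tilde\gamma\in\Gamma_{\nu,h(x)}$. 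This produces a genuine (many-to-one) curve-level map $H:\Gamma_{\mu,x}\to\Gamma_{\nu,h(x)}$, which is then extended to all of $\X_{f,\mu}$ by sending a minimal nested sequence $(s_n)$ to a suitably refined nested sequence of surfaces bounded by the curves $H(\partial s_n)$; surjectivity comes from stable efficiency of $g$, and the other direction is obtained by exchanging $f$ and $g$. Without this shrinking-by-iteration step, a curve of one family can meet $\Lambda$ in a set split among several curves of the other family in both directions, and there is no coherent choice of level-$n$ folds; any successful version of your inverse-limit tower would have to prove an equivalent of Lemma \ref{LemCurvas} first.
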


Having a negative answer to Question \ref{QuestTreeUni} and in light of theorem \ref{TeoSemi}, it is natural to wonder the following:
\begin{quest}\label{questPrin}
Given $f\in \text{MD}^r (\D)$ stable efficient, are all possible tree models semi-conjugate to each other in both directions like in Theorem \ref{TeoSemi}? 
\end{quest}

A positive answer to this question implies that mildly dissipative maps are in essence a two-dimensional differentiable model of ``conjugacy" classes of one-dimensional endomorphisms in trees. 

A step toward getting a positive answer to Question \ref{questPrin} is to understand the converse of Question \ref{QuestInv}.
	
\begin{quest}
Given $f,g\in \text{MD}^r (\D)$, consider $(\X_f,\hat{f})$ and $(\X_g,\hat{g})$ associated to $f$ and $g$ respectively. If the dynamics in $\hat f$ and $\hat g$ are equivalent, then are the dynamics of $f$ and $g$ also equivalent? 
\end{quest}

The answer to this question depends on the type of equivalency we are looking for. For the ergodic type, we show a positive result.

\begin{teoprin}\label{TeoConvErg}
Let $f,g\in \text{MD}^r (\D)$ be stable efficient, and suppose there exist $Y_f\subset \X_f$, $Y_g\subset \X_g$, and $\hat h: Y_f\to Y_g$ -- a measurable bijection between sets of full measure for every aperiodic ergodic measure of $\hat{f}$ and $\hat{g}$. If $\hat{g}\circ \hat h= \hat h\circ \hat{f}_{|Y_f}$ then, there exist $M_f\subset \Lambda_f$ and $M_g\subset \Lambda_g$ and $h:M_f\to M_g$ such that:
\begin{enumerate}
\item $M_f$ and $M_g$ have full measure for every aperiodic ergodic measure of $f$ and $g$, respectively. 
\item $\pi_{g|_{M_g}} \circ h = \hat h \circ \pi_{f|_{M_f}}$
\item $g_{|_{M_g}}\circ h = h \circ f_{|_{M_f}}$ 
\item The  map $h_*:\M_a(f)\to \M_a(g)$ induced by $h$ is a bijection.  
\end{enumerate}
\end{teoprin}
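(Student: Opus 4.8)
The plan is to lift the measurable tree conjugacy $\hat h$ to the surface by inverting the projections $\pi_f$ and $\pi_g$ along their fibres. The essential point is that, although $\pi_f$ and $\pi_g$ collapse entire stable leaves, they become measurable isomorphisms once restricted to the aperiodic ergodic part. Concretely, I would first record the following consequence of the construction in Section~\ref{treeConstruction} and of Theorem~\ref{teoCP01}(2): for every $\mu\in\M_a(f)$ the map $\pi_f$ restricts to a Borel isomorphism between a $\mu$-conull subset of $\Lambda_f$ and a $\pi_{f*}\mu$-conull subset of $\X_f$. Indeed, the injectivity of $\pi_*$ on $\M_a(f)$ is proven by exhibiting a measurable section, i.e.\ by showing that the fibre of $\pi_f$ through a $\mu$-generic point meets the measurable support in a single point; this is where the contraction along the one-dimensional stable manifolds $W^s_\D(x)$ and the aperiodicity of $\mu$ enter. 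The same holds for $g$.

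Granting this, I would assemble a \emph{single} Borel set $M_f^0\subset\Lambda_f$ (respectively $M_g^0\subset\Lambda_g$) that is conull for \emph{every} aperiodic ergodic measure and on which $\pi_f$ (resp.\ $\pi_g$) is injective, by taking the Borel set of points whose stable class is a singleton inside the generic set — a condition that does not refer to any particular measure. Writing $\sigma_f,\sigma_g$ for the resulting measurable inverses, with $\sigma_g\colon\pi_g(M_g^0)\to M_g^0$, I define
\[
h \;=\; \sigma_g\circ\hat h\circ\pi_f
\]
on $M_f \;=\; M_f^0\cap\pi_f^{-1}\big(Y_f\cap\hat h^{-1}(\pi_g(M_g^0))\big)$, and set $M_g=h(M_f)$.

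Then I would verify the four conclusions. Property (1), that $M_f$ and $M_g$ are conull for all aperiodic ergodic measures, follows because each factor in the composition carries conull sets (for aperiodic ergodic measures) to conull sets: $\pi_{f*}$ and $\pi_{g*}$ are bijections $\M_a(f)\to\M_a(\hat f)$ and $\M_a(g)\to\M_a(\hat g)$ by Theorem~\ref{teoCP01} and Proposition~\ref{PropProp}, while $\hat h_*$ is a bijection $\M_a(\hat f)\to\M_a(\hat g)$ since $\hat h$ is a conjugacy on $Y_f$. Property (2) is immediate from $\pi_g\circ\sigma_g=\mathrm{id}$ on $\pi_g(M_g^0)$, giving $\pi_g\circ h=\hat h\circ\pi_f$. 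Property (3) is the chain
\[
\begin{aligned}
g\circ h &=g\circ\sigma_g\circ\hat h\circ\pi_f
=\sigma_g\circ\hat g\circ\hat h\circ\pi_f\\
&=\sigma_g\circ\hat h\circ\hat f\circ\pi_f
=\sigma_g\circ\hat h\circ\pi_f\circ f=h\circ f,
\end{aligned}
\]
valid off a set that is null for every aperiodic ergodic measure, where I use that $\sigma_g$ intertwines $\hat g$ and $g$ (being the measurable inverse of the semi-conjugacy $\pi_g$), the hypothesis $\hat g\hat h=\hat h\hat f$ on $Y_f$, and $\hat f\pi_f=\pi_f f$. Finally, for property (4), $h$ is a measurable bijection between conull sets intertwining $f$ and $g$, so $h_*\colon\M_a(f)\to\M_a(g)$ is well defined and injective, with inverse induced by $\sigma_f\circ\hat h^{-1}\circ\pi_g$; this yields the bijection.

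I expect the main obstacle to be the first step carried out \emph{uniformly in the measure}. Establishing that a single Borel set serves as the domain of injectivity of $\pi_f$ simultaneously for all aperiodic ergodic measures — and that all the a.e.\ identities above ($\pi_g\sigma_g=\mathrm{id}$, $g\sigma_g=\sigma_g\hat g$, and the conjugacy relation) hold off a set null for every such measure at once, rather than one measure at a time — is the delicate point, compounded by the fact that $\hat h$ is only given between the sets $Y_f$ and $Y_g$, forcing a careful intersection with their pullbacks while preserving conullity for the whole family $\M_a$.
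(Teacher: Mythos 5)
Your reduction of the problem to inverting the projections is where the argument breaks down: the key lemma you posit --- that $\pi_f$ restricts to a Borel isomorphism between a $\mu$-conull subset of $\Lambda_f$ and a $\pi_{f*}\mu$-conull subset of $\X_f$ --- is false. The fibres of $\pi_f$ are the curves of $\Gamma_f$ (segments of stable manifolds), and for a typical aperiodic ergodic measure the conditional measures of $\mu$ along these fibres are non-atomic. Already for the horseshoe with its measure of maximal entropy, a generic stable segment meets $\Lambda_f$ in a Cantor set carrying a non-atomic Bernoulli conditional, so $\pi_f$ is uncountable-to-one on a set of full measure. Injectivity of $\pi_{f*}$ on $\M_a(f)$ (Theorem \ref{teoCP01}) only says that distinct aperiodic ergodic measures have distinct projections; it does not produce a pointwise section, and the paper flags exactly this obstruction: ``there is no canonical way to define the inverse of $\pi_f$.'' Consequently no measurable section $\sigma_g$ with the properties you use exists: an arbitrary measurable selection of the fibres will not intertwine $\hat g$ with $g$, so your computation of property (3) fails at the step $g\circ\sigma_g=\sigma_g\circ\hat g$, and $\sigma_{g*}\bigl(\hat h_*\pi_{f*}\mu\bigr)$ need not even be $g$-invariant, let alone equal to the measure $\nu$ with $\pi_{g*}\nu=\hat h_*\pi_{f*}\mu$; property (4) collapses with it.

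The missing idea is to replace the single projection by the full backward itinerary. The paper passes to the inverse limits $\varprojlim\X_f$ and $\varprojlim\X_g$ and to the maps $\phi_f(x)=[\cdots,\pi_f(f^{-n}(x)),\cdots,\pi_f(x)]$. Because the arcs $\hat\gamma_{-n}$ joining two points with the same backward itinerary satisfy $f(\hat\gamma_{-n})=\hat\gamma_{-n+1}$, such points lie in the same connected component $[x]$ of $\pi_f^{-1}(\pi_f(x))\cap\Lambda_f$, and for regular points this component is a singleton; hence $\phi_f$ \emph{is} injective on a set $N_f$ of full measure for every aperiodic ergodic measure, even though $\pi_f$ is not. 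One then lifts $\hat h$ coordinatewise to $\hat H$ on the inverse limits and sets $h=\phi_g^{-1}\circ\hat H\circ\phi_f$; your verifications of (2)--(4) then go through essentially as written with $\phi_f,\phi_g^{-1}$ in place of $\pi_f,\sigma_g$, using that $\Pi_{f*}$ and $\Pi_{g*}$ are bijections on invariant measures.
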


Despite $\pi_*$ being a bijection between aperiodic ergodic measures, the previous theorem is not immediate, because there is no canonical way to define the inverse of $\pi_f:\D\to \X_f$. The conjugacy $\hat h$ gives us a bijection between segments of stable manifolds, yet it does not help identify which point goes into which point. However, by the properties of the stable manifolds, we can extract this information through the inverse limits of the trees (see \cite{Viana} for an exposition on inverse limits).  In \cite{borosti}, for Wang--Young parameters in the Henon family and for the Misiurewicz--\v{S}timac parameters of Lozi maps, the authors show that the inverse limits of the tree are conjugate to the maximal invariant set of $f$.

This paper is structured as follows:
\begin{itemize}
\item In section \ref{treeConstruction}, we do a brief recall on how natural tree models are built. 
\item In section \ref{CSH}, we analyze possible ways to construct a tree model associated to the horseshoe and in particular show and explain Example \ref{contra}.
\item In section \ref{SecProp}, we prove Propositions \ref{PropProp} and \ref{PropStabEff}.
\item In section \ref{SecTeoSemi}, we prove Proposition \ref{ConjDisco} and Theorems \ref{TeoSemi} and \ref{TeoSemimu}.
\item In section \ref{SecConv}, we prove Theorem  \ref{TeoConvErg}.
\end{itemize}

\section{Construction of the tree associated to a mildly dissipative map}\label{treeConstruction}
In this section, we briefly recall the construction carried out in \cite{pujcrov}.

\subsection{Regular points and hyperbolic blocks}\label{subSecRegHypBlo}

Let us recall the basics of Pesin theory. In our context, a hyperbolic block is a compact set $B$ (not necessarily invariant) such that there exist $0<\lambda< 1$, $C>0$, and for each $x\in B$, there are two families of one-dimensional subspaces $E^s(f^n(x)), E(f^n(x)) \subset \R^2$ verifying the following:
\begin{enumerate}
\item $\R^2= E^s(f^n(x))\oplus E(f^n(x))$,  $df^n_x(E^s(x))= E^s(f^{n}(x))$, $df^n_x(E(x))= E(f^{n}(x))$.
\item $|df^n_x(v)|\leq C\lambda^n |v|$, for all $n> 0$ and for all $v\in E^s(x)$
\item $|df^n_x(v)|\geq C^{-1} \lambda^{n} |v|$, for all $n< 0$ and for all $v\in E^s(x)$ 
\item $\lim_{n\to \pm \infty}\frac{1}{n}log(|df^n_x(v)|)\geq 0$ for all $v\in E(x)$ 
\end{enumerate} 

For $f\in\text{MD}^r (\D)$ and an ergodic aperiodic measure $\mu$, dissipativness of $f$, aperiodicity of $\mu$,  and Oseledets theorem implies the existence of hyperbolic blocks with positive measure. Moreover, the union of every block ($\cup_{\lambda, C}B$) has full measure. 

We may consider a countable family of blocks $\{B_n\}_{n\in\N}$ ordered by inclusion such that $\mu(\cup_n B_n)=1$. We say that a point $x\in \D$ is regular if its orbit is dense in the support of the measure, belongs to a block $B_n$, and the intersection between the orbit of $x$ and $B_m$ is dense in $B_m$ for every $m\geq n$. We use $B^*_n$ to represent the set of regular points in $B_n$ and $R_{f,\mu}$ to represent the set of all regular points associated to $\mu$. Based on the ergodicity of $\mu$ and the Poincaré recurrence theorem, $\mu(B^*_n)=\mu(B_n)$ and therefore $\mu(R_{f,\mu})=1$. The extra care we took previously allows us when considering two regular points $x,y$ that they belong to the same block. From now on, when considering blocks, we mean one of the family $\{B_n\}$, but we shall lose the $n$ in the notation. 

For regular points, we have a stable manifold theorem in each block. We shall call $ \operatorname{Emb^1}((-\hat \varepsilon,\hat\varepsilon),\D^2)$ the space of $C^1$ embeddings of the interval $(-\hat \varepsilon,\hat\varepsilon)$ into $\D^2$ with $C^1$ topology. 

\begin{teo}[Stable manifold]\label{teoStaMan}
Consider $f\in\text{MD}^r (\D)$, $\mu$  an ergodic aperiodic measure, and $B$ a hyperbolic block for $\mu$. Then, there exist $\hat \varepsilon$ and a continuous map $\psi:B^*\to \operatorname{Emb^1}((-\hat \varepsilon,\hat\varepsilon),\D^2)$ verifying the following:
\begin{enumerate}
\item $\psi(x)((-\varepsilon,\varepsilon))=W^s_\varepsilon(x) =\{y\in\D^2:d(f^n(x),f^n(y))<\varepsilon,\ \forall n\geq 0\}$ for all $\varepsilon \leq \hat\varepsilon$ and all $x\in B^*$ 
\item $T_x W^s_\varepsilon (x) = E^s(x)$ for all $x\in B^*$ 
\item For some constant $C(\varepsilon)$, holds $length(f^n(W^s_\varepsilon(x)))\leq C\lambda^n$, for all $n>0$ and $x\in B^*$ 
\item  If $\{n_i\}_{i\in \N}=\{n\in\N: f^{n}(x)\in B^*\}$, then 
\[W^s(x)=\{y\in M:lim_n\ d(f^n(y),f^n(x))=0\}=\cup_{i\in \N}f^{-n_i}(W^s_\varepsilon(f^{n_i}(x))).\]
\end{enumerate} 
\end{teo}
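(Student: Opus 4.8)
The plan is to realize $W^s_\varepsilon(x)$, for $x\in B^*$, through a Hadamard--Perron graph transform performed along the forward orbit $\{f^n(x)\}_{n\geq 0}$, exploiting that on the block the constants $C$ and $\lambda$ are uniform. First I would replace the Euclidean metric by adapted (Lyapunov) inner products $\langle\cdot,\cdot\rangle_{f^n(x)}$ on $\R^2=E^s(f^n(x))\oplus E(f^n(x))$ so that, measured in these norms, $Df$ contracts $E^s$ by a definite factor $\tilde\lambda<1$ at every single step and the angle between $E^s$ and $E$ stays bounded below. Because the block estimates are uniform, the comparison constants between these adapted norms and the Euclidean norm are bounded uniformly over $B$ (and tempered along each orbit), which is precisely what later yields continuity in $x$. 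In the charts given by $E^s\oplus E$ at each orbit point, $f$ reads as a hyperbolic linear map plus a nonlinear remainder whose $C^1$ size on the ball of radius $\hat\varepsilon$ can be made arbitrarily small by shrinking $\hat\varepsilon$, using that $f$ is $C^r$ with $r>1$ on the compact disk.

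Next I would set up the space of sequences of $C^1$ graphs $\gamma_n:E^s_\varepsilon(f^n(x))\to E(f^n(x))$ with $\gamma_n(0)=0$ and slope bounded by one (a stable cone condition), and define the transform $\Gamma$ that pulls the graph over $f^{n+1}(x)$ back through the local expression of $f$ and rewrites it as a graph over $f^n(x)$. The two things to check are that the cone condition is preserved -- an invariant-cone argument from the one-step hyperbolicity and the smallness of the nonlinear term -- and that $\Gamma$ is a uniform contraction in the sup metric; the unique fixed point $\{\gamma_n\}$ produces $W^s_\varepsilon(x)=\mathrm{graph}(\gamma_0)$, and I would take $\psi(x)$ to be the induced $C^1$ parametrization of this graph. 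Standard bootstrapping promotes the fixed point to $C^1$, and since $\gamma_0(0)=0$ with $D\gamma_0(0)=0$, item (2), $T_xW^s_\varepsilon(x)=E^s(x)$, is immediate. Converting the adapted-norm contraction back to the Euclidean metric along the orbit gives item (3), $\mathrm{length}(f^n(W^s_\varepsilon(x)))\leq C\lambda^n$. For item (1) I would prove both inclusions: every point of $\mathrm{graph}(\gamma_0)$ stays within $\varepsilon$ of the orbit forever by the contraction, while any point whose forward orbit stays $\varepsilon$-close must lie on the graph, since a nonzero $E$-component would be expanded out of the $\varepsilon$-ball by the hyperbolic estimate; this argument is independent of $\varepsilon\leq\hat\varepsilon$, giving the statement for all such $\varepsilon$.

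To produce the continuous map $\psi:B^*\to\operatorname{Emb^1}((-\hat\varepsilon,\hat\varepsilon),\D^2)$ I would note that the splittings $E^s,E$ vary continuously over the block, the nonlinearity bounds and contraction rates are uniform, and the adapted-to-Euclidean comparison is uniform; hence the fixed point of $\Gamma$, together with its derivative, depends continuously on the base point $x$, which is continuity of $\psi$ in the $C^1$ topology. Finally, for item (4) I would use that for a regular point the return times $\{n_i\}=\{n:f^n(x)\in B^*\}$ are infinite. Each $f^{-n_i}(W^s_\varepsilon(f^{n_i}(x)))$ is a curve through $x$ whose forward orbit converges to that of $x$ (it lies on a local stable manifold past time $n_i$), so it is contained in $W^s(x)$; conversely, if $d(f^n(y),f^n(x))\to 0$, then at some large $n_i$ the point $f^{n_i}(y)$ enters $W^s_\varepsilon(f^{n_i}(x))$ and remains close, so $y$ belongs to the corresponding pullback. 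These curves are nested, and their union is exactly $W^s(x)$.

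The main obstacle is making the hyperbolicity genuinely one-step and uniform despite the block being non-invariant: the Pesin estimates hold along the whole orbit of a block point, but to run a contracting graph transform with constants independent of $x\in B^*$ I must build the adapted norms carefully and verify that their comparison with the Euclidean metric is bounded over the entire block rather than merely tempered along individual orbits -- this uniform comparability is what simultaneously delivers the contraction, the length bound (3), and the continuity of $\psi$. The assembly in item (4) is likewise delicate, since between consecutive return times the orbit leaves $B$, so the global manifold is recovered only along the subsequence of returns.
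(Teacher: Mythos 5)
The paper does not prove this theorem itself; it simply cites Chapter 7 of the Pesin theory reference, and your Hadamard--Perron graph-transform sketch (adapted Lyapunov norms, invariant cones, contraction on sequences of graphs along the forward orbit, continuity from uniformity of constants over the block, and recovery of the global stable manifold via returns to $B^*$) is exactly the standard argument given there. Your outline is correct, and you rightly identify the one genuinely delicate point -- that the adapted-to-Euclidean norm comparison must be uniformly bounded over the block (not merely tempered along individual orbits) to obtain the uniform contraction, the length estimate in item (3), and the $C^1$-continuity of $\psi$.
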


See chapter 7 in \cite{Pesin} for a proof of this theorem. 

We now define for each regular point $x$ of $\mu$ a collection of curves $\Gamma_{\mu,x}$. This collection verifies the following:
\begin{enumerate}
\item[P1] Each $\gamma\in\Gamma_{\mu,x}$ is a connected component of  $W^s(f^n(x))\cap \D$ for some $n\in\Z$. 
\item[P2] Each segment of each $\gamma\in\Gamma_{\mu,x}$ is the $C^1$ limit of segments of curves in $\Gamma_{\mu,x}$.  Moreover, this accumulation happens from both sides. 
\item[P3]  For each $\gamma\in\Gamma_{\mu,x}$, the connected components of $f^{-1}(\gamma)\cap \D$ that intersect $f(\D)$  also belong to $\Gamma_{\mu,x}$.
\item[P4] For any other $y\in R_{f,\mu}$, every segment of every curve in $\Gamma_{\mu,y}$ is the $C^1$-limit of segments of curves in $\Gamma_{\mu,x}$. Also, this accumulation happens from both sides. 
\end{enumerate}	
	
We would like to make two observations on property P1. First, $f^n(x)$ may not belong to $\gamma$, and, second, the curves in $\Gamma_{\mu,x}$ either coincide or are pairwise disjoint.
	
To obtain such a family, we choose $x\in R_{f,\mu}$, a regular point for $\mu$, and consider $\Gamma_0$ the collection of connected components of $W^s(f^{n}(x))\cap \D$ that contains $f^{n}(x)$, with $n\in \Z$. Now,  we define $\Gamma_i$ by induction. For $i>0$,  $\Gamma_i$ is the collection of connected components of $f^{-1}(\gamma)\cap \D$ that intersects $f(\D)$ for $\gamma\in\Gamma_{i-1}$. Once we have $\Gamma_i$ defined, we consider $\Gamma_{\mu,x}=\bigcup_{i\geq 0}\Gamma_{i}$.

 By construction, $\Gamma_{\mu,x}$ naturally verifies P1 and P3. Since $x$ is regular, its orbit is dense in the support of $\mu$, which implies accumulation in properties P2 and P4 for at least one side. The accumulation by both sides is granted because $\Gamma_\mu$ is countable. If the accumulation happens from only one side, then the support of $ \mu $ would be countable. This is a contradiction with  $\mu$ being aperiodic.

\subsection{The tree model}\label{subsectreemodel}

We shall construct $\X$ and $\hat f:\X\to \X$  the tree model of $(f,\D)$. We consider
\[\Gamma=\bigcup_{\mu\in \M_a(f)} \Gamma_{\mu,x},\]
where   $x$ is a single point in $R_{f,\mu}$. This collection  $\Gamma$ verifies P1, P2, and P3, as well as:
\begin{itemize}
	\item[P4*] 
	Given any aperiodic ergodic measure $ \mu $, there is a full measure set of points $y$ such that the connected components of $ W^s (y) \cap \D $ are $ C^1 $- limit of arcs in $\Gamma$ and are accumulated by both sides.
	\end{itemize}

The property P4 of $\Gamma_{\mu,x}$ implies that this construction does not depend on the choice of $x$. Now that we have the collection of curves $\Gamma$, we proceed to define $\X$ as follows. We use $s$ to denote any compact connected surfaces $ s\subset\D $ whose boundary consist of a finite number of curves from $\Gamma$ and  $\partial\D$. We define the collection of nested sequences of such surfaces as 
\[\Sigma=\{(s_n)_{n\in\N}: \overline{s_{n+1}}\subset \interior(s_n) , \text{ for each } n \}.\]
Given $(s_n)$ and $(s'_n)$ in $\Sigma$, we say that $(s_n)\leq (s'_n)$ if for all $n$, there exists $m$ such that $\overline{s_m}\subset s'_n$. We say that $(s_n)\sim (s'_n)$ if $(s_n)\leq (s'_n)$ and $(s'_n)\leq (s_n)$. Since $\sim$ is an equivalence relation, we define $\tilde \Sigma = \Sigma/\sim$. By our definition of $\leq$, it induces a partial order in $\tilde \Sigma$, and with it, we define $\X$ as the set of minimal elements of $\tilde \Sigma$.

Property P2 implies that each element of $\gamma\in \Gamma$ is also an element of $\X$. Since $\X$ is also a partition of $\D$, the projection $\pi:\D\to \X$ is naturally defined. We shall represent the elements of $\X$  by $\hat x$ in general or by $\gamma$ if it is a curve of $\Gamma$. We consider in $\X$ the quotient topology, and, by definition, $\pi$ is a continuous function for this topology. Therefore, $\X$ is compact, and it is easily identifiable as a Hausdorff space as well. Finally, from the property that defines mildly dissipative maps, it is inferred that $\X$ is a real tree. By this, we mean that for every pair of points $x,y\in \X$, there exists a unique (up to reparametrizations) curve $\gamma:[0,1]\to \X$ that is continuous and injective with $\gamma(0)=x$ and $\gamma(1)=y$. 

We say that $f$ has a finite ergodic covering if there exists a finite number of measures $\mu_1,\cdots, \mu_l\in \M_a(f)$ such that $\pi(\cup_{i=1}^{l} \Gamma_{\mu_i,x_i})$ is dense in $\X$.  

The authors in \cite{pujcrov} built the natural tree model $(\X,\hat f)$ upon every ergodic aperiodic measure of $f$. However, if we fix $\mu$, the same processes using $\Gamma_{\mu,x}$ instead of $\Gamma$ produce a tree $\X_\mu$, a map $\hat f_\mu: \X_\mu\to \X_\mu$, and a continuous projection $\pi_\mu :\D\to \X_\mu$ verifying $\pi_\mu \circ f = \hat f_\mu \circ \pi_\mu$.

When dealing with two maps $f$ and $g$, to differentiate the trees associated to each map, we shall write $\X_f$, $\X_g$ for the general trees and $\X_{f,\mu}$ and $\X_{g,\nu}$ for the trees associated to single measures $\mu$ and $\nu$.

\section{Case study: the horseshoe}\label{CSH}

In this section, we study the tree associated to the classical construction of the horseshoe in a disc show how to cut regions in order to obtain a different tree as claimed in Example \ref{contra}.

Let $f:\D\rightarrow f(\D)$ be the classical construction of the horseshoe in a disc. In this case, the points that have a stable manifold to consider are those in the horseshoe. Here, the family of stable manifolds $\{W^s_\D(x)\}$ is totally ordered, and therefore $\X$ is an interval (see Figure \ref{TreeHorseshoe01}). Moreover, it is simple to observe that $\hat{f}$  is in fact the tent map. 

\begin{figure}[h]
\centering
\input{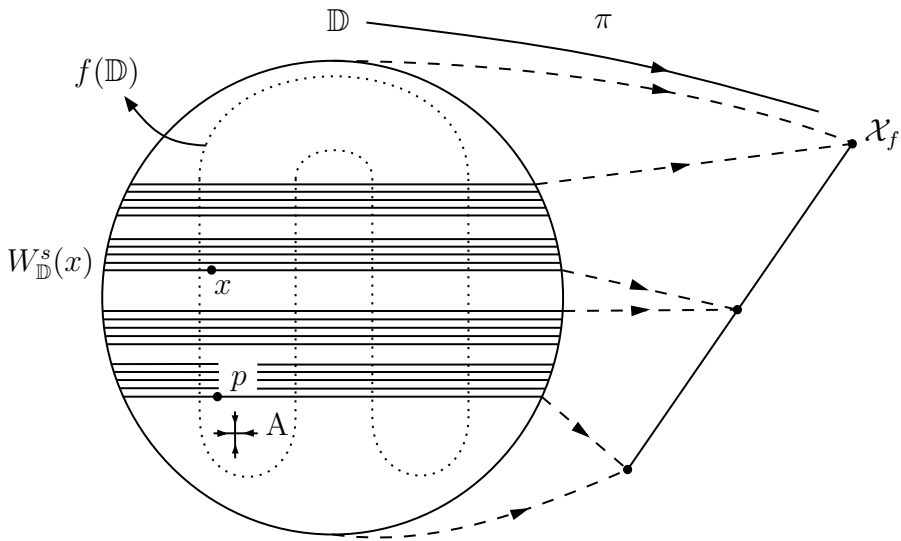}
\caption{Tree model associated to the horseshoe in $\D$.}\label{TreeHorseshoe01}
\end{figure}

We would like to point out that in this case, $\pi_{*}$ is not injective in the whole set $\mathcal{M}_1(f)$. Note that $A$, the attracting fixed point of $f$, and $p$, one of the two hyperbolic fixed points of $f$, are projected on the same fixed point in $\X$. Therefore, the Dirac measures $\delta_A$ and $\delta_p$ have the same image under  $\pi_{*}$.

We proceed to explain Example \ref{contra}. We need two maps $f$ and $g$, and we take $(f,\D)$ as the classical horseshoe we had worked with. To construct $g$, we cut a region from $\D$, obtaining a set $D$ that is also a disc. We then take a diffeomorphism $h:\D\to D$ and define $g=h^{-1}\circ f_{|D}\circ h$. For the map $g$ to be well defined, we only need that $f(\D)\subset D$.

Consider $D$ as in Figure \ref{TreeHorseshoe02}, and observe that half of the stable manifolds $W^s_\D(x,f)$ have been split in two and the other half not. In particular, the family of curves $W^s_\D(x,g)$ is not  totally ordered anymore, and $\X_g$ is thus not an interval. In essence, we have cut through half of the interval, obtaining a tree with three branches.

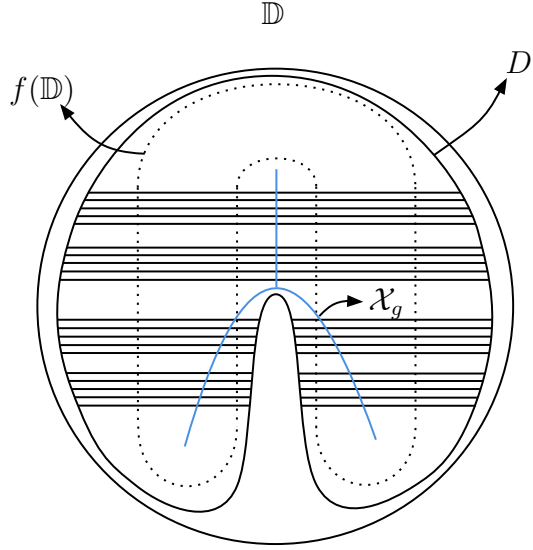
\begin{figure}[h]
\centering
\tikzset{every picture/.style={line width=0.75pt}} 

\begin{tikzpicture}[x=0.75pt,y=0.75pt,yscale=-1,xscale=1]

\draw   (119.45,159.97) .. controls (119.45,93.71) and (173.16,40) .. (239.43,40) .. controls (305.69,40) and (359.4,93.71) .. (359.4,159.97) .. controls (359.4,226.24) and (305.69,279.95) .. (239.43,279.95) .. controls (173.16,279.95) and (119.45,226.24) .. (119.45,159.97) -- cycle ;
\draw    (145.27,102.59) -- (333,102.59) ;
\draw  [dash pattern={on 0.84pt off 2.51pt}]  (170.08,100.04) .. controls (169.92,30.54) and (310.13,30.44) .. (310.08,99.79) ;
\draw  [dash pattern={on 0.84pt off 2.51pt}]  (220.17,99.88) .. controls (220.25,80.31) and (260.25,80.56) .. (260.08,99.54) ;
\draw  [dash pattern={on 0.84pt off 2.51pt}]  (170.08,100.04) -- (170.17,219.75) ;
\draw  [dash pattern={on 0.84pt off 2.51pt}]  (220.17,99.88) -- (220.08,220.54) ;
\draw  [dash pattern={on 0.84pt off 2.51pt}]  (260.08,99.54) -- (260.08,220.38) ;
\draw  [dash pattern={on 0.84pt off 2.51pt}]  (310.08,99.79) -- (310.17,220.04) ;
\draw  [dash pattern={on 0.84pt off 2.51pt}]  (260.08,220.38) .. controls (260,259.9) and (310.13,260.27) .. (310.17,220.04) ;
\draw  [dash pattern={on 0.84pt off 2.51pt}]  (170.33,220.63) .. controls (170.17,260.9) and (220.17,260.75) .. (220.08,220.54) ;
\draw    (131.75,58.94) .. controls (144.38,74.5) and (150.75,82.06) .. (173.63,82.94) ;
\draw  [fill={rgb, 255:red, 0; green, 0; blue, 0 }  ,fill opacity=1 ] (131.75,58.94) -- (137.37,62.43) -- (133.66,65.28) -- cycle ;
\draw    (252.73,210.14) -- (340.47,210.14) ;
\draw    (252.33,206.05) -- (341.67,206.05) ;
\draw    (251.8,201.71) -- (343,201.71) ;
\draw    (143.13,106.34) -- (335.27,106.34) ;
\draw    (251.4,197.65) -- (344.47,197.65) ;
\draw    (250.33,183.57) -- (347.53,183.57) ;
\draw    (249.27,175.25) -- (348.33,175.25) ;
\draw    (249.8,179.45) -- (348.2,179.45) ;
\draw    (135.4,197.62) -- (227.4,197.62) ;
\draw    (138.2,205.85) -- (226.6,205.85) ;
\draw    (136.5,201.71) -- (227.27,201.71) ;
\draw    (139.4,114.46) -- (339.27,114.46) ;
\draw    (141.27,110.52) -- (337.4,110.52) ;
\draw    (134.33,130.33) -- (343.93,130.33) ;
\draw    (132.33,138.07) -- (345.4,138.07) ;
\draw    (133.27,134.16) -- (344.47,134.16) ;
\draw    (131.4,142.36) -- (346.33,142.36) ;
\draw    (248.2,170.99) -- (348.73,170.99) ;
\draw    (134.07,193.64) -- (227.67,193.64) ;
\draw    (251,193.88) -- (345.4,193.88) ;
\draw    (247.53,166.73) -- (348.73,166.73) ;
\draw    (138.2,118.34) -- (340.6,118.34) ;
\draw    (130.87,146.61) -- (347.27,146.61) ;
\draw   (137.5,119.88) .. controls (144.75,97.88) and (174.75,43.63) .. (237.75,43.63) .. controls (300.75,43.62) and (336,100.38) .. (341.75,122.38) .. controls (347.5,144.38) and (353.75,167.13) .. (343,201.63) .. controls (332.25,236.13) and (319.5,244.13) .. (309.5,250.63) .. controls (299.5,257.13) and (272.5,272.38) .. (260.5,250.63) .. controls (248.5,228.88) and (254.5,153.88) .. (239.75,153.88) .. controls (225,153.88) and (229.5,227.63) .. (220.25,250.38) .. controls (211,273.13) and (184,262.13) .. (169.5,250.63) .. controls (155,239.13) and (148.75,234.38) .. (136.5,201.13) .. controls (124.25,167.88) and (130.25,141.88) .. (137.5,119.88) -- cycle ;
\draw    (319.5,83.63) .. controls (339.25,69.63) and (345.25,66.63) .. (353.25,50.13) ;
\draw  [fill={rgb, 255:red, 0; green, 0; blue, 0 }  ,fill opacity=1 ] (355.6,45.92) -- (354.72,52.48) -- (350.6,50.26) -- cycle ;
\draw    (231.4,166.73) -- (129.53,166.73) ;
\draw    (230.47,171) -- (129.8,171) ;
\draw    (229.67,175.27) -- (130.33,175.27) ;
\draw    (229.4,179.53) -- (131,179.53) ;
\draw    (228.73,183.53) -- (131.67,183.53) ;
\draw    (139.93,210.07) -- (226.47,210.07) ;
\draw [color={rgb, 255:red, 74; green, 144; blue, 226 }  ,draw opacity=1 ]   (193.8,230.7) .. controls (223,127.5) and (253.2,122.1) .. (290.2,227.3) ;
\draw [color={rgb, 255:red, 74; green, 144; blue, 226 }  ,draw opacity=1 ]   (240,90.75) -- (240,150.3) ;
\draw    (261.65,164.9) .. controls (267.2,159.1) and (270.6,156.7) .. (280.4,157.3) ;
\draw  [fill={rgb, 255:red, 0; green, 0; blue, 0 }  ,fill opacity=1 ] (282.39,157.62) -- (276.09,159.66) -- (276.31,154.99) -- cycle ;

\draw (230.99,4.76) node [anchor=north west][inner sep=0.75pt]    {$\mathbb{D}$};
\draw (103.8,42) node [anchor=north west][inner sep=0.75pt]    {$f(\mathbb{D})$};
\draw (354,30) node [anchor=north west][inner sep=0.75pt]    {$D$};
\draw (284.92,149.32) node [anchor=north west][inner sep=0.75pt]    {$\mathcal{X}_{g}$};

\end{tikzpicture}
\caption{Associated tree of the horseshoe in $D$}\label{TreeHorseshoe02}
\end{figure}

\section{Proof of Propositions \ref{PropProp} and \ref{PropStabEff}}\label{SecProp}

We begin this section with the proof of Proposition \ref{PropProp}

\begin{proof}[of Proposition \ref{PropProp}]

Given $f\in \text{MD}^r(\D)$ and its natural induced tree model $(\hat{f},\X)$, consider $\Gamma$ the family of curves constructed in section \ref{treeConstruction}.

Let us begin by proving that 	$\pi(\per(f))= \per(\hat{f})$. It is clear that $\pi(\per(f))\subset \per(\hat f)$, and therefore we only need to show the remaining inclusion. Consider a periodic point $\hat p\in \per(\hat{f})$ of period $k$. First,  observe that $\hat p$ is not represented by any element in $\Gamma$, since elements in $\Gamma$ are associated to ergodic aperiodic measures. We consider the compact set
\[K = \pi^{-1}(\hat p \cup \hat f (\hat p) \cup \cdots \cup \hat f^{k-1}(\hat p))\subset \D \]
and observe that $f(K)\subset K$. Therefore, we conclude the existence of an ergodic measure $\mu$ such that $support(\mu)\subset K$. If this measure were aperiodic, there would be an infinite amount of points inside of $K$ that project in $\X$ injectively. However, this cannot happen because $\pi(K)$ is the orbit of $\hat p$, a finite set. Then, $\mu$ is periodic, and with this, we infer the existence of $p\in \D$ such that $\pi(p)=\hat p$.

We proceed now to prove that $\pi_*: \M_1(f)\to \M_1(\hat f)$ is surjective. Since $\pi_*$ is a convex map and $\pi(\per(f))= \per(\hat{f})$, we only need to prove that $\pi_*$ is surjective in the set of aperiodic ergodic measures. Consider $\hat{\mu}$ an aperiodic ergodic measure of $\hat f$ and $\hat{x}\in \X$ such that 
\[\lim_{n\to\infty}\frac{1}{n}\sum_{i=0}^{n-1}\delta_{\hat{f}^i(\hat{x})}=\hat{\mu}.\]
From  surjectivity of $\pi$, there exists $x\in \D$ such that $\pi(x)=\hat{x}$. Since $f$ is continuous, the sequence of measures  $\frac{1}{n}\sum_{j=0}^{n-1}\delta_{f^j(x)}$ has at least one accumulating point $\mu\in \M_1(f)$.  Consider a sequence $n_k$ such that $\lim_{k\to\infty}\frac{1}{n_k}\sum_{i=0}^{n_k-1}\delta_{f^i(x)}=\mu$ and observe that
\begin{equation}
\begin{split}
\pi_* (\mu) & = \pi_* \left(\lim_{k\to\infty}\frac{1}{n_k}\sum_{i=0}^{n_k-1}\delta_{f^i(x)}\right) \\
& = \lim_{k\to\infty} \frac{1}{n_k}\sum_{i=0}^{n_k-1}\pi_*(\delta_{f^i(x)}) \\
& = \lim_{k\to\infty} \frac{1}{n_k}\sum_{i=0}^{n_k-1}\delta_{\hat{f}^i(\hat{x})} \\
& = \hat \mu.
\end{split}
\end{equation}
 This proves our assertion.
\end{proof}

We now aim to prove Proposition \ref{PropStabEff}. Let us begin with an equivalent definition for stable efficiency. Given $f\in \text{MD}^r(\D)$, consider $\Gamma$ the family of curves constructed in section \ref{treeConstruction} and $\Lambda_f$ the maximal invariant set of $f$ in $\D$. We say that $\gamma\in \Gamma$ splits $\Lambda_f$ if at least two components of $\D\setminus \gamma$ intersect $\Lambda_f$.

\begin{lema}
 Given $f\in \text{MD}^r(\D)$, $f$ is stable efficient if and only if every curve $\gamma\in \Gamma$ splits $\Lambda_f$. 
\end{lema}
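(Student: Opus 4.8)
The plan is to translate the geometric condition ``$\gamma$ splits $\Lambda_f$'' into a statement about the branches of the tree at the point $\gamma$, and then to read off both implications from the position of $\pi(\Lambda_f)$ inside $\X_f$.

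First I would set up the following dictionary. Fix $\gamma\in\Gamma$; as a cross-cut of $\D$ it separates $\D$ into components $U_1,\dots,U_m$ (with $m\ge 2$ by mild dissipation), and as a point of $\X_f$ it determines the components $T_1,\dots,T_k$ of $\X_f\setminus\{\gamma\}$. Because every fiber $\pi^{-1}(\hat y)$ is connected and the fibers are pairwise disjoint, for $\hat y\neq\gamma$ the fiber $\pi^{-1}(\hat y)$ is connected and disjoint from $\gamma$, hence contained in a single $U_i$; and since $\pi(U_i)$ is connected and misses $\gamma$, it lies in a single $T_j$. This produces a bijection $U_i\leftrightarrow T_i$ with $\pi^{-1}(T_i)\subseteq U_i$, and therefore $U_i\cap\Lambda_f\neq\emptyset$ if and only if $T_i\cap\pi(\Lambda_f)\neq\emptyset$. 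Writing $A:=\pi(\Lambda_f)$, this gives the reformulation: $\gamma$ splits $\Lambda_f$ if and only if at least two components of $\X_f\setminus\{\gamma\}$ meet $A$.

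The implication ``stable efficient $\Rightarrow$ every $\gamma$ splits'' is then immediate: if $A=\X_f$, every one of the (at least two) components of $\X_f\setminus\{\gamma\}$ meets $A$, so $\gamma$ splits $\Lambda_f$. For the converse I argue by contraposition. The key observation is that $\Lambda_f$ is connected: since $f(\D)\subseteq\D$, one has $\Lambda_f=\bigcap_{n\ge 0}f^n(\D)$, a nested intersection of the continua $f^n(\D)$ (each a connected compact set, being a continuous image of $\D$), hence a continuum. Consequently $A=\pi(\Lambda_f)$ is a connected compact subset of the real tree $\X_f$, i.e.\ a subtree. If $f$ is not stable efficient then $A\subsetneq\X_f$, so there is a component $V$ of $\X_f\setminus A$; because $A$ is a subtree, $V$ is attached to $A$ at a single point, $\overline V\setminus V=\{\hat a\}$ with $\hat a\in A$. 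Using that $\pi(\Gamma)$ is dense in $\X_f$ (from the accumulation properties P2--P4), I would choose $\gamma\in\Gamma$ whose tree point lies in $V$. Removing $\gamma$, the only branch of $\X_f\setminus\{\gamma\}$ that can meet $A$ is the one containing $\hat a$: a point of $A$ in another branch would be joined to $\gamma$ by an arc avoiding $\hat a$, forcing that arc to leave $V$ without crossing $\partial V=\{\hat a\}$, which is impossible in a tree. Thus at most one component of $\X_f\setminus\{\gamma\}$ meets $A$, so by the dictionary $\gamma$ does not split $\Lambda_f$, as required.

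The routine points are the fiber-connectedness bookkeeping and the standard fact that a component of the complement of a subtree is attached along a single point. The step I expect to carry the real weight is the connectedness of $\Lambda_f$: it is what rules out the otherwise fatal possibility of a ``bare arc'' gap in $A$ (an open arc of $\X_f$ with both endpoints, but no interior point, in $A$), every interior curve of which would split $\Lambda_f$ while witnessing $A\neq\X_f$. I would therefore state and prove the identity $\Lambda_f=\bigcap_{n\ge 0}f^n(\D)$ carefully, since it is valid precisely because $f$ maps $\D$ into itself, and it is what makes the maximal invariant set a continuum and drives the whole argument.
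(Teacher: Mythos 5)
Your proof is correct in substance and uses the same three ingredients as the paper's (density of $\Gamma$ in $\X_f$, closedness of $\pi(\Lambda_f)$, and, crucially, connectedness of $\Lambda_f=\bigcap_{n\ge 0}f^n(\D)$ as a nested intersection of continua -- the paper also relies on this, though only implicitly), but you package the converse differently: you argue by contraposition on the tree side, via a ``dictionary'' matching components of $\D\setminus\gamma$ with branches of $\X_f\setminus\{\gamma\}$, whereas the paper argues directly in the disk. The paper's route is: if $\gamma$ splits $\Lambda_f$, then the continuum $\Lambda_f$ meets a component $U$ of $\D\setminus\gamma$ and its complement, hence meets $\partial U\subset\gamma$, so $\gamma\in\pi(\Lambda_f)$; thus $\Gamma\subset\pi(\Lambda_f)$, and density of $\Gamma$ plus closedness of $\pi(\Lambda_f)$ finish. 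The one soft spot in your version is the asserted bijection $U_i\leftrightarrow T_i$: the half you actually use in the contrapositive (that two components of $\D\setminus\gamma$ meeting $\Lambda_f$ force two branches of $\X_f\setminus\{\gamma\}$ to meet $A$, equivalently $\pi^{-1}(T_j)\subseteq U_{i(j)}$) is the non-obvious direction and is not justified by fiber-connectedness alone; it needs monotonicity of $\pi$ in the form ``preimages of arcs are connected,'' applied to the arc in $T_j$ joining two candidate images. You can bypass this entirely: for your chosen $\gamma$ with tree point in $V\subset\X_f\setminus A$ you already know $\gamma\cap\Lambda_f=\emptyset$, and then connectedness of $\Lambda_f$ plus $\partial U\subset\gamma$ (and $\Lambda_f\cap\partial\D=\emptyset$) directly confines $\Lambda_f$ to a single component of $\D\setminus\gamma$ -- which is exactly the paper's disk-side argument. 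With that repair your proof is complete; the tree-side detour (the gate point $\hat a$ of $V$, the subtree structure of $A$) is correct but not needed.
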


This lemma is the reason why we call the property stable efficiency. During the inductive construction of  $\Gamma_i$, if we only get curves $\gamma$ that split the maximal invariant set, then $\pi(\Lambda_f) = \X$.   

\begin{proof}
$(\Longrightarrow):$ Let us assume that $\pi(\Lambda_f) = \X$, and consider $\gamma\in \Gamma$. Property P2 in the construction of $\Gamma$ implies that $\gamma$ is never an endpoint of  $\X$.  Thus, $\X \setminus \{\gamma\}$ has at least two connected components. In each of these components, we have points of $\pi(\Lambda_f)$, and in at least two components of $\D\setminus \gamma$, we have points of $\Lambda_f$.  

$(\Longleftarrow):$ Suppose that every curve $\gamma\in \Gamma$ splits $\Lambda_f$; then we claim that $\gamma \cap \Lambda_f \neq \emptyset$ for every $\gamma \in \Gamma$. It is easy to observe that if $C$ is a connected set that intersects $A$ and $A^c$, then it also intersects its border. We use this assertion considering $C= \Lambda_f$, $A$ some connected component of  $\D\setminus \gamma$ that intersects $\Lambda_f$, and the fact that the border of $A$ is a subset of $\gamma$. Therefore, $\Gamma\subset \pi(\Lambda_f)$. Since $\Gamma$ is dense in $\X$ and  $\pi(\Lambda_f)$ is closed, we conclude $\pi(\Lambda_f)=\X$.
\end{proof}

The previous lemma gives the idea of for proving  Proposition \ref{PropStabEff}: remove from the disk the regions that contain curves $\Gamma$ that do not split $\Lambda_f$. 

\begin{proof}[of proposition \ref{PropStabEff}]
Consider $\Gamma^+=\{\gamma \in \Gamma:\ \gamma\text{ splits}\ \Lambda_f\}$ and $\Gamma^- = \Gamma \setminus \Gamma^+$. Let $\Delta^-$ be the family of compact connected surfaces $S^-$ whose border (in $\D$) are curves in $\Gamma^-$ and  such that $\Lambda_f\subset S^-$.  

Define $D^- = \bigcap_{S^-\in \Delta^-} S^-$ and $D=\pi^{-1}(\pi(\Lambda_f))$. By our previous lemma and the continuity of $\pi$, we infer that $ D = D^-$. From this equality, it is clear that $D$ is also a disk.  

Now, observe that $f_{|D}$ remains mildly dissipative. The association $\mu\in \M_a(f)\mapsto \mu_{|D}\in M_a(f_{|D})$ defined by $\mu_{|D}(A)=\mu(A\cap D)$ is a bijection. Moreover, regular points remain regular; $R_{f,\mu}= R_{f_{|D},\mu_{|D} }$. Consider $\tilde \Lambda_f$ the maximal invariant set of $f_{|D}$. We already know that $\tilde \Lambda_f\subset \Lambda_f$ because $D\subset \D$. By definition of $D^-$, we also conclude the other inclusion; therefore,  $\tilde \Lambda_f= \Lambda_f$. 

For each $x\in R_{f_{|D},\mu_{|D} }$, we shall call  $\tilde \Gamma_{\mu,x}$ the family of curves $\Gamma_{\mu_{|D},x}$, as in subsection \ref{subSecRegHypBlo} for $(f_{|_D},D)$ instead of $(f,\D)$. Then, we define  $\tilde \Gamma= \bigcup_\mu \tilde \Gamma_{\mu,x}$ and claim that $\tilde \Gamma = \Gamma^+$.  The inclusion $\tilde \Gamma\subset \Gamma^+$ happens because $\tilde \Gamma \subset \{\gamma\in \Gamma: \gamma \subset D\} = \Gamma^+$. On the other hand, the curves in $\Gamma^+$ are not eliminated in the inductive process. Since the curves in $\Gamma^+$ verify $\gamma\cap \Lambda_f \neq \emptyset$ and $\Lambda_f = \tilde\Lambda_f$, they also verify $\gamma \cap f(D)\neq \emptyset$. Therefore, these curves remain, and by the previous lemma, we deduce that $f_{|_D}$ is stable efficient. In particular, the tree associated to $f_{|_D}$ is the tree $\pi(\Lambda_f)$. 
\end{proof}

\section{Proof of Theorems \ref{TeoSemi} and \ref{TeoSemimu}}  \label{SecTeoSemi}

In this section, we prove Theorems \ref{TeoSemi} and \ref{TeoSemimu}. We now proceed to prove Proposition \ref{ConjDisco}, which will be used to prove Theorem \ref{TeoSemimu}. 

\begin{proof}[of Proposition \ref{ConjDisco}]

Consider $f, g \in \text{MD}^r(\D)$, and suppose there exists a homeomorphism $h:\D\to \D$ such that $h\circ f=g\circ h$. Our aim is to construct a homeomorphism $\hat h: \X_f\to \X_g$ that conjugates $\hat{f}$ and $\hat{g}$.
 
Given $\mu\in \M_a(f)$, we define  $\nu = h_*\mu\in \M_a(g)$. Thus, $h$ induces a bijection between the ergodic aperiodic measures of $f$ and $g$. Consider $R_{f,\mu}$ the set of regular points of $\mu$ and $R_{g,\nu}$ the set of regular points of $\nu$. Since $\mu(R_{f,\mu}) = 1$ and $\nu(R_{g,\nu})=1$, we can assume that $h( R_{f,\mu})= R_{g,\nu}$. If this is not the case, then we can take the intersection, and the measure of this set must be 1. Now, consider the collections of curves $\Gamma_{\mu,x}$ and $\Gamma_{\nu,h(x)}$ for some $x\in R_{f,\mu}$, as defined in subsection \ref{subSecRegHypBlo}. Recall that we defined $\Gamma_{\mu,x}$ as the union of a family of curves $\Gamma_i$ with $i\in \N$. To distinguish the families of curves $\Gamma_i$ associated to $\mu$ from the families of curves $\Gamma_i$ associated to $\nu$, we shall call them $\Gamma_{\mu,i}$ and $\Gamma_{\nu,i}$ respectively. Since $h$ is defined in the whole disk, $h(W^s_\D(x))= W^s_\D(h(x))$.  Then, $h$ provides a map  $\hat h:\Gamma_{\mu,0}\to\Gamma_{\nu,0}$  defined by $\hat h(\gamma)=h(\gamma)$ that naturally extends first to a bijection between $\Gamma_{\mu,x}$  and  $\Gamma_{\nu,h(x)}$ and later to a bijection between $\Gamma_f$ and $\Gamma_g$. To illustrate this, pick $\hat \gamma \in \Gamma_{\mu,0}$ and consider the subsets $A= \{\gamma\in \Gamma_{\mu,1}: f(\gamma)=\hat \gamma\}$ and $B=\{\gamma\in \Gamma_{\nu,1}: g(\gamma)=\hat h(\hat \gamma)\}$. By the conjugacy  $h$, we know  $\gamma \in A\mapsto h(\gamma)\in B$ is a bijection; thus, $\hat h (\gamma) = h(\gamma)$ is well defined from $\Gamma_{\mu,1}$ to $\Gamma_{\nu,1}$. 

Consider the projections $\pi_f:\D\rightarrow \X_f$ and $\pi_g:\D\rightarrow \X_g$ and define the map $\hat h:\X_f\rightarrow \X_g$ as $\hat h(\pi_f(x))=\pi_g(h(x))$ for  each $x\in\D$. We claim that $\hat h$ is well defined. Based on our previous discussion, if $\pi_f(x)$ represents a curve in $\Gamma_f$, then it is clear. On the other hand, consider $y,z\in\D$, such that $\pi_f(y)=\pi_f(z)=(s_n)$, where $(s_n)$ is a nested sequence of surfaces in $\Sigma$. We know by definition that  $y, z\in \bigcap s_n$. Suppose by contradiction that $h(y)$ and $h(z)$ are separated  by some curve $\gamma\in\Gamma_g$, since $h^{-1}$ is  homeomorphism $h^{-1}(\gamma)$  separates $y$ and $z$, which is an absurd. 

Once $\hat h$ is well defined, its continuity is deduced by the continuity of $\pi_f$, $\pi_g$, and $h$. We can also see that
\begin{equation}
\begin{split}
\hat h ( \hat f (\pi_f(y)))& = \hat h (\pi_f(f(y))) = \pi_g(h(f(y)))= \pi_g(g(h(y))) \\
& = \hat g( \pi_g(h(y))) = \hat g (\hat h (\pi_f(y))),
\end{split}
\end{equation}
and since $\pi_f$ is surjective, we infer $\hat h \circ \hat f  = \hat g \circ \hat h$.

Analogously, with  $h^{-1}$, we define  the inverse of $\hat h$, which verifies
$\hat h^{-1}\circ\pi_g=\pi_f\circ h^{-1}$ and then $\hat h^{-1}\circ \hat{g}=\hat{f}\circ \hat h^{-1}$.

\end{proof}

We also deduce that the same result holds for the trees $\X_\mu$ associated to a single measure $\mu$. 

\begin{lema}\label{ConjDiscoMed}
	Let $f$  and $g$ be in  $ \text{MD}^r (\D)$, and suppose there exists a homeomorphism $h:\D\to \D$  that verifies $g\circ h = h \circ f$. Given $\mu \in \M_a(f)$ and $\nu = h_*(\mu)\in \M_a(g)$,  there exists a homeomorphism $\hat h_{\mu,\nu}:\X_{f,\mu}\to \X_{g,\nu}$  such that $\hat g_\nu \circ \hat h_{\mu,\nu} = \hat h_{\mu,\nu} \circ \hat f_\mu$. 
\end{lema}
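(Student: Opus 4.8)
The plan is to mirror the proof of Proposition \ref{ConjDisco}, but to stop the construction one level earlier, at the single-measure families of curves rather than at the full collections $\Gamma_f$ and $\Gamma_g$. Indeed, the argument for Proposition \ref{ConjDisco} already produces, for the fixed pair $\mu$ and $\nu=h_*(\mu)$, a bijection $\hat h:\Gamma_{\mu,x}\to\Gamma_{\nu,h(x)}$ given by $\hat h(\gamma)=h(\gamma)$; this is exactly the content of the inductive step comparing $A=\{\gamma\in\Gamma_{\mu,1}:f(\gamma)=\hat\gamma\}$ with $B=\{\gamma\in\Gamma_{\nu,1}:g(\gamma)=\hat h(\hat\gamma)\}$, which used only the conjugacy $h$ and the identity $h(W^s_\D(x))=W^s_\D(h(x))$. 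Since $\X_{f,\mu}$ and $\X_{g,\nu}$ are built by the exact same procedure as $\X_f$ and $\X_g$, merely replacing $\Gamma_f$ and $\Gamma_g$ by $\Gamma_{\mu,x}$ and $\Gamma_{\nu,h(x)}$, this bijection is precisely the ingredient needed.

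Concretely, I would let $\pi_\mu:\D\to\X_{f,\mu}$ and $\pi_\nu:\D\to\X_{g,\nu}$ denote the two projections and define $\hat h_{\mu,\nu}$ by $\hat h_{\mu,\nu}(\pi_\mu(x))=\pi_\nu(h(x))$. Well-definedness is checked exactly as before: if $\pi_\mu(x)$ is represented by a curve of $\Gamma_{\mu,x}$, the bijection above settles it; otherwise $\pi_\mu(y)=\pi_\mu(z)=(s_n)$ for a nested sequence of surfaces whose boundaries lie in $\Gamma_{\mu,x}\cup\partial\D$, so that $y,z\in\bigcap_n s_n$, and were $h(y)$ and $h(z)$ separated by some $\gamma\in\Gamma_{\nu,h(x)}$, then $h^{-1}(\gamma)\in\Gamma_{\mu,x}$ (again by the bijection) would separate $y$ and $z$, a contradiction.

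With $\hat h_{\mu,\nu}$ well defined, its continuity follows from the continuity of $\pi_\mu$, $\pi_\nu$ and $h$, and the conjugacy relation $\hat g_\nu\circ\hat h_{\mu,\nu}=\hat h_{\mu,\nu}\circ\hat f_\mu$ is obtained by the same chain of equalities, now reading $\hat h_{\mu,\nu}(\hat f_\mu(\pi_\mu(y)))=\pi_\nu(h(f(y)))=\pi_\nu(g(h(y)))=\hat g_\nu(\hat h_{\mu,\nu}(\pi_\mu(y)))$ and using surjectivity of $\pi_\mu$. Finally, applying the whole construction to $h^{-1}$ -- which conjugates $g$ to $f$ and satisfies $(h^{-1})_*(\nu)=\mu$ -- yields the inverse map and shows $\hat h_{\mu,\nu}$ is a homeomorphism.

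I expect no serious obstacle here: the lemma is genuinely a corollary, since the single-measure bijection $\Gamma_{\mu,x}\to\Gamma_{\nu,h(x)}$ is already extracted inside the proof of Proposition \ref{ConjDisco}. The only point deserving care is the well-definedness step, where one must confirm that a separating curve in the $\nu$-family pulls back under the homeomorphism $h^{-1}$ to a separating curve that still lies in the $\mu$-family; this is exactly what the bijection guarantees, and it is the reason the argument restricts cleanly to a single measure without leaking information from the other curves of $\Gamma$.
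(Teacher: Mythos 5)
Your proposal is correct and follows essentially the same route as the paper, which in fact offers no separate proof of this lemma: it simply asserts it as a deduction from the proof of Proposition \ref{ConjDisco}, exactly as you do by restricting that argument to the single-measure families $\Gamma_{\mu,x}$ and $\Gamma_{\nu,h(x)}$. Your write-up merely makes explicit the restriction step (including the key point that the separating curve pulls back into the $\mu$-family), which is a faithful expansion of the paper's intended argument.
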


Let us consider $f\in \text{MD}^r(\D)$ and observe that $f_{|f(\D)}\in \text{MD}^r(f(\D))$. Now, we can construct a tree model for $(f,\D)$ and another one for $(f_{|f(\D)}, f(\D))$. Let us call them $(\X,\hat f)$ and $(\X^1,\hat f_1)$ respectively. Since $f:\D\to f(\D)$ is a homeomorphism that conjugates $f$ and $f_{|\D}$ by our previous proposition, we note that $(\X,\hat f)$ and $(\X^1,\hat f_1)$ are conjugate. In particular, the following lemma is true.

\begin{lema}\label{LemTreeK}
Given $f\in \text{MD}^r(\D)$ and $k\geq 1$, if $(\X^k,\hat f_k)$ is the natural tree model associated to $f_{|f^k(\D)}\in \text{MD}^r(f^k(\D))$, then $( \X,\hat f)$ is conjugate (by a homeomorphism) to $(\X^k,\hat f_k)$. Moreover, for  any $\mu \in \M_a(f)$, $(\X_\mu, \hat f_\mu)$ is conjugate (by a homeomorphism) to $(\X^k_\mu, \hat f_{k,\mu})$. 
\end{lema}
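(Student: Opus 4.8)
The plan is to deduce the lemma directly from Proposition~\ref{ConjDisco} and Lemma~\ref{ConjDiscoMed}, taking the conjugating homeomorphism to be $f^k$ itself. First I would record the elementary preliminaries. Since $f$ is a $C^r$ embedding, $f^k:\D\to f^k(\D)$ is a homeomorphism onto its image; since $|\det Df|<1$, the restriction $f_{|f^k(\D)}$ is still dissipative, and as $f(f^k(\D))=f^{k+1}(\D)\subset f^k(\D)$ it is a well-defined self-embedding of the topological disk $f^k(\D)$. The splitting property defining mild dissipation is inherited by restriction to this forward-invariant disk, so $f_{|f^k(\D)}\in\text{MD}^r(f^k(\D))$, exactly as is used for $k=1$ in the discussion preceding the statement.

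Next I would check the intertwining relation. For every $x\in\D$ we have $f_{|f^k(\D)}(f^k(x))=f^{k+1}(x)=f^k(f(x))$, that is $f_{|f^k(\D)}\circ f^k=f^k\circ f$; this is precisely the hypothesis of Proposition~\ref{ConjDisco} with $g=f_{|f^k(\D)}$ and the homeomorphism $h=f^k$. Applying the proposition yields a homeomorphism $\hat h:\X\to\X^k$ with $\hat f_k\circ\hat h=\hat h\circ\hat f$, which is the first assertion. For the single-measure statement I would use that the measure transported by the conjugacy is $(f^k)_*\mu=\mu$, since $\mu$ is $f$-invariant and supported on $\Lambda_f\subset f^k(\D)$; hence $\mu$ is again the relevant aperiodic ergodic measure downstairs, and Lemma~\ref{ConjDiscoMed} applied to $h=f^k$ furnishes a homeomorphism $\X_\mu\to\X^k_\mu$ conjugating $\hat f_\mu$ with $\hat f_{k,\mu}$.

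The only delicate point -- and the place where this is not a verbatim citation -- is that Proposition~\ref{ConjDisco} and Lemma~\ref{ConjDiscoMed} are phrased for two maps on the standard disk $\D$, while here the target dynamics lives on $f^k(\D)$. Neither the tree construction of section~\ref{treeConstruction} nor the proof of Proposition~\ref{ConjDisco} uses more than that the domain is a disk and that $h$ is a homeomorphism intertwining the two dynamics and carrying stable manifolds of $f$ onto stable manifolds of $g$; the last property is automatic here because $h=f^k$ sends $W^s_\D(x)$ onto the component of the stable set of $f^k(x)$ for $f_{|f^k(\D)}$. This is the same generalization already invoked implicitly for $k=1$, so the cited results apply unchanged. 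To sidestep this remark entirely, I would instead argue by induction on $k$: the explicit $k=1$ observation applied to $f_{|f^k(\D)}\in\text{MD}^r(f^k(\D))$ gives $(\X^k,\hat f_k)\cong(\X^{k+1},\hat f_{k+1})$ and, using $(f_{|f^k(\D)})_*\mu=\mu$, also $(\X^k_\mu,\hat f_{k,\mu})\cong(\X^{k+1}_\mu,\hat f_{k+1,\mu})$; composing these conjugacies from the base case upward proves the lemma. Since the induction only reuses the already-justified case $k=1$, this is the route I expect to be cleanest to write out, and I anticipate no genuine obstacle beyond the bookkeeping just described.
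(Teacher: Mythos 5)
Your proposal is correct and matches the paper's argument: the authors likewise observe that $f$ itself is a homeomorphism from $\D$ onto $f(\D)$ conjugating $f$ with $f_{|f(\D)}$, invoke Proposition~\ref{ConjDisco} (and its single-measure version) to conjugate the trees, and obtain the general $k$ by iteration. Your remarks on $(f^k)_*\mu=\mu$ and on transporting the proposition from $\D$ to the disk $f^k(\D)$ are exactly the implicit bookkeeping the paper leaves to the reader.
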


To give a sense of the proof of Theorem \ref{TeoSemi}, let us explain why there are semi-conjugacies in both directions in  Example \ref{contra} (see Figure \ref{ArvDinConMaxInv}). Recall that in Example \ref{contra},  $(f,\D)$ is the classical definition of the horseshoe in the disk, and $g$ is (up to a conjugation) the restriction of $f$ to $D$ that is homeomorphic to a disk. In other words, we may consider $g=f_{|D}$. It is easily to observe that any $\gamma_g\in \Gamma_g$ is contained in some $\gamma_f\in \Gamma_f$. This induces the inclusion map $i_1:\Gamma_g\to \Gamma_f$. Stable efficiency tells us that this map is surjective, and we can extend it to a continuous and surjective map $\hat i_1:\X_g \to \X_f$. Naturally, this map is a semi-conjugacy between $\hat g$ and $\hat f$. For the remaining semi-conjugacy, we consider $\Gamma_f^1$ the family of curves associated to $(f, f(\D))$ and $(\X_f^1,\hat f_1)$ its natural tree model.  For every $\gamma\in \Gamma_f^1$, there exists $\gamma_g\in \Gamma_g$ such that $\gamma\subset \gamma_g$. This defines a map $i_2:\Gamma_f^1\to \Gamma_g$ that we use to construct a semi-conjugacy between $(\X_f^1,\hat f_1)$ and $(\X_g,\hat g)$. Since $(\X_f^1,\hat f_1)$ is conjugate to $(\X_f,\hat f)$, we naturally have the final semi-conjugacy between $\hat f$ and $\hat g$. 

\begin{figure}[h]
\centering
\input{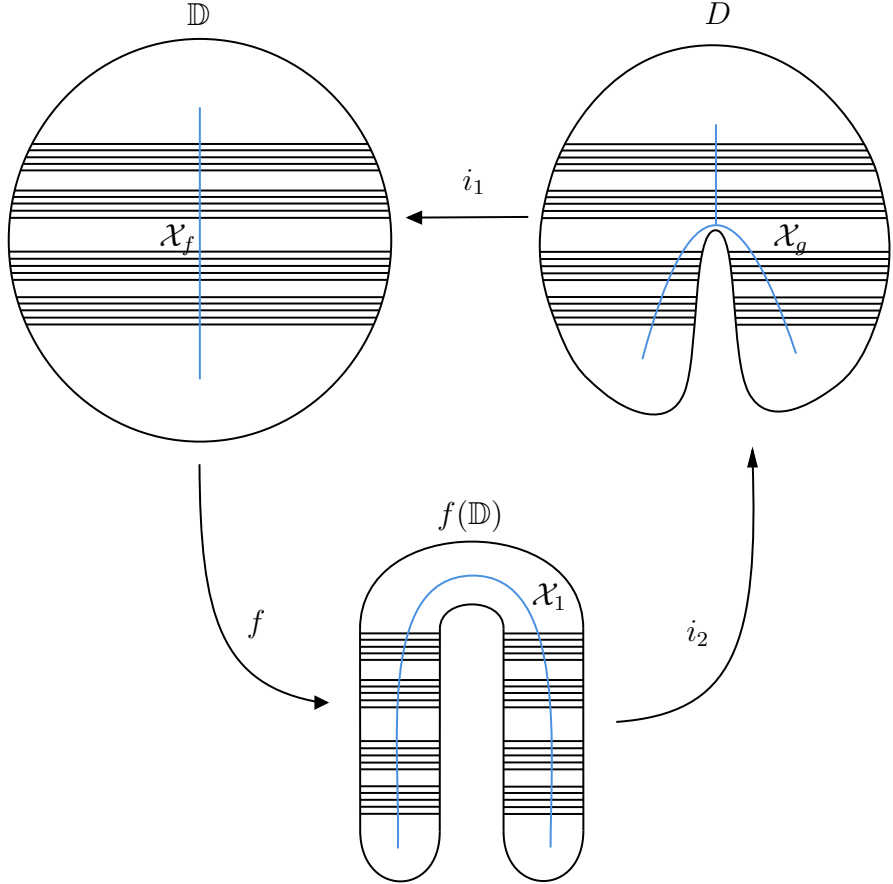}
\caption{Inclusion of curves.}\label{ArvDinConMaxInv}				
\end{figure}

We now aim to prove Theorem \ref{TeoSemimu}.

Suppose we are in the hypothesis of Theorem \ref{TeoSemimu}. As in the proof of Proposition  \ref{ConjDisco}, we consider $R_{f,\mu}\subset \Lambda_f$ and $R_{g,\nu}\subset \Lambda_g$ subsets of regular points verifying $\mu(R_{f,\mu})=1$, $\nu(R_{g,\nu})=1$, and $h(R_{f,\mu})=R_{g,\nu}$. We fix $x\in R_{f,\mu}$.  Observe that for $\gamma\in \Gamma_{\mu,x}$, $h(\gamma)$ no longer makes sense, $\gamma$ includes points that do not belong to $\Lambda_f$. Therefore, we need to consider $h(\gamma\cap\Lambda_f)$. We would like that for every $\gamma\in \Gamma_{\mu,x}$, there exists $\tilde \gamma\in \Gamma_{\nu,h(x)}$ such that $h(\gamma\cap\Lambda_f)\subset \tilde \gamma$. With this, we could define an inclusion as in the example and from there extend it to the tree as a semi-conjugacy. Unfortunately, this does not always happen. In the example of the horseshoe, the curves in the classical construction are not contained in the curves for the cut disk. However, if we iterate $f$ enough, we do have this property. 

For each $k\geq 1$, consider $(\X^k,\hat f_k)$ the natural tree model associated to $f_{|f^k(\D)}\in \text{MD}^r(f^k(\D))$. Observe first that  $R_{f_{|f^k(\D)},\mu_{|f^k(\D)}}= R_{f,\mu}$  and consider $\Gamma^k_{\mu,x}$ the family of curves that define $\X^k$.

\begin{lema}\label{LemCurvas} There exists $k>0$ such that for every $\gamma\in \Gamma^k_{\mu,x}$, there exists $\tilde\gamma \in \Gamma_{\nu,h(x)}$ for which $h(\gamma\cap\Lambda_f)\subset \tilde \gamma$.  Moreover, for every $\tilde \gamma\in \Gamma_{\nu,h(x)}$, there is at least one $\gamma \in \Gamma^k_{\mu,x}$ that verifies  $h(\gamma\cap\Lambda_f)\subset \tilde \gamma$.
\end{lema}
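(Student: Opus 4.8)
The plan is to compare the two families \emph{through the conjugacy $h$}, exploiting that after enough forward iterations the curves of $\Gamma^k_{\mu,x}$ become uniformly small, so that their $h$-images are trapped inside a single local stable manifold of $g$ and hence inside a single curve of $\Gamma_{\nu,h(x)}$. Fix a hyperbolic block $B_g$ for $\nu$ with local stable size $\hat\varepsilon_g$ as in Theorem \ref{teoStaMan}, and let $\omega$ be a modulus of uniform continuity for the homeomorphism $h\colon\Lambda_f\to\Lambda_g$ (which exists by compactness). For $k\ge 1$ set
\[
\delta_k=\sup\{\operatorname{diam}(J): J\subset f^k(\D)\text{ a connected arc of an }f\text{-stable manifold}\}.
\]
Since $f^{k+1}(\D)\subset f^{k}(\D)$, the sequence $(\delta_k)$ is nonincreasing, and the heart of the matter is to show $\delta_k\to 0$. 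Granting this, fix $k$ so large that $\omega(\delta_k)<\hat\varepsilon_g$; this is the $k$ the lemma asks for.

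Next I would pin down the target leaf. By construction every curve of $\Gamma^k_{\mu,x}$ lies on the stable manifold of an orbit point of $x$: backward iterates of stable manifolds are again stable manifolds of orbit points (the orbit is indexed by $\Z$), so for $\gamma\in\Gamma^k_{\mu,x}$ one has $\gamma\subset W^s_f(f^m x)$ for some $m\in\Z$. Because $h$ conjugates $f|_{\Lambda_f}$ with $g|_{\Lambda_g}$ it carries the forward-stable relation on $\Lambda_f$ to that on $\Lambda_g$, whence $h(\gamma\cap\Lambda_f)\subset W^s_g(g^m h(x))$. To see these images land in a \emph{single} connected component, take $q_1,q_2\in\gamma\cap\Lambda_f$; since $f^{n}(\gamma)$ is again a connected stable arc inside $f^{k+n}(\D)$, we get $d(f^{n}q_1,f^{n}q_2)\le\delta_{k+n}\le\delta_k$ for all $n\ge 0$, and applying $h$, $d(g^{n}h q_1,g^{n}h q_2)\le\omega(\delta_k)<\hat\varepsilon_g$ for all $n\ge 0$. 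Anchoring at a regular point $p\in\gamma\cap R_{f,\mu}$ (available because the family curves are built from local stable manifolds of regular points and their preimages), so that $h(p)\in R_{g,\nu}\cap B_g^*$, item (1) of Theorem \ref{teoStaMan} identifies the set of forward $\hat\varepsilon_g$-close points with $W^s_{\hat\varepsilon_g}(h(p))$; thus $h(\gamma\cap\Lambda_f)\subset W^s_{\hat\varepsilon_g}(h(p))$, a connected arc contained in a single component $\tilde\gamma$ of $W^s_g(g^m h(x))\cap\D$.

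It remains to check $\tilde\gamma\in\Gamma_{\nu,h(x)}$ and to deduce the ``moreover''. For membership, note $\tilde\gamma$ meets $\Lambda_g$ and sits on the leaf of the orbit point $g^m h(x)$; for $w\in\tilde\gamma\cap\Lambda_g$ one has $d(g^{n}w,g^{m+n}h(x))\to 0$, so some forward iterate enters $W^s_{\hat\varepsilon_g}(g^{m+n}h(x))$, i.e.\ the $\Gamma_{\nu,0}$-component through the orbit point; pulling this component back through the (finitely many) $g^{-1}$-steps of the construction, which preserve the property of meeting $g(\D)$ since $\Lambda_g\subset g(\D)$, shows $\tilde\gamma\in\Gamma_{\nu,h(x)}$. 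This gives the first assertion. For the second, given $\tilde\gamma\in\Gamma_{\nu,h(x)}$ choose $w\in\tilde\gamma\cap\Lambda_g$ (nonempty because $g$ is stable efficient), set $q=h^{-1}(w)\in\Lambda_f$, and let $\gamma\in\Gamma^k_{\mu,x}$ be the component of the corresponding orbit-point leaf $\cap\, f^k(\D)$ through $q$ (in the family by the same saturation argument applied to $f$); the first part yields $h(\gamma\cap\Lambda_f)\subset$ some curve of $\Gamma_{\nu,h(x)}$, which must be $\tilde\gamma$ since it contains $w=h(q)$.

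The main obstacle is the quantitative input $\delta_k\to 0$. I would argue it by backward expansion together with compactness: if $\delta_k\not\to 0$, there are $\eta_0>0$ and connected stable arcs $J_k\subset f^k(\D)$ with $\operatorname{diam}(J_k)\ge\eta_0$; a Hausdorff limit of the $J_k$ would be a nontrivial connected stable arc contained in $\bigcap_{n\ge 0}f^{n}(\D)$, every point of which has full backward orbit in $\D$. But item (3) of Theorem \ref{teoStaMan} forces the backward iterates of a nontrivial stable arc meeting a block to grow in length without bound, contradicting that they remain inside the bounded disk $\D$. Making this rigorous is delicate precisely because one must control the excursions of the backward orbit \emph{outside} the hyperbolic blocks before it returns; this is where I expect the genuine work of the proof to lie, and where the uniform block estimates of Pesin theory (and mild dissipation, guaranteeing area contraction of $f^{n}(\D)$) must be used carefully.
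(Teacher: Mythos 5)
Your overall strategy (uniform continuity of $h$ on $\Lambda_f$, plus shrinking of the relevant stable arcs as $k$ grows, plus the local characterization of $W^s_\e$ from Theorem \ref{teoStaMan}) is the same as the paper's, but there are two genuine gaps. First, the quantitative input $\delta_k\to 0$ is not established, and as you have set it up it would be hard to establish: you take a Hausdorff limit of arbitrary connected stable arcs $J_k$ and assert the limit is ``a nontrivial connected stable arc,'' but a Hausdorff limit of stable manifolds need not be a stable manifold, and item (3) of Theorem \ref{teoStaMan} only gives forward contraction for arcs anchored at points of a block, not backward expansion of an arbitrary limit arc. The paper avoids this by restricting attention to the single orbit of the regular point $x$: it fixes one compact hyperbolic block $B$ containing $x$, looks only at the return times $n_i$ of $x$ to $B$, and uses compactness of $B$ together with the continuity of $\psi:B^*\to\operatorname{Emb^1}$ to produce, from a putative sequence of long arcs $W^s_{f^k(\D)}(f^{n_{i_k}}(x))$, an honest stable segment of a point $y\in B$ contained in $\Lambda_f$ --- which contradicts backward expansion on the block; the smallness is then propagated to all $n\in\Z$ by forward contraction. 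You flag this as ``where the genuine work lies,'' which is an accurate self-assessment: the lemma is not proved without it.

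Second, your step ``anchoring at a regular point $p\in\gamma\cap R_{f,\mu}$'' is unavailable for most curves of the family. Only the level-$0$ curves contain points of the orbit of $x$; a curve in $\Gamma^k_{\mu,i}$ with $i\geq 1$ is a connected component of $f^{-1}(\gamma')\cap f^k(\D)$ that merely intersects $f^{k+1}(\D)$, and it need not contain any regular point (or even meet $\operatorname{supp}\mu$), so you cannot invoke Theorem \ref{teoStaMan}(1) at a point of $\gamma$ to trap $h(\gamma\cap\Lambda_f)$ in a single local stable manifold. This is exactly why the paper first proves the statement for $\Gamma^k_{\mu,0}$ and then runs a separate induction over the levels $i\geq 1$, after shrinking $\e$ so that distinct components $\gamma_1,\gamma_2\in\Gamma_{\nu,h(x)}$ of a $g$-preimage of the same curve are at distance at least $\e$ from one another (using compactness of $\Lambda_g$ and its positive distance to $\partial\D$); that separation is what forces the small set $h(\gamma_1\cap\Lambda_f)$ to meet only one candidate component. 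Your argument as written skips this inductive step entirely. The treatment of the ``moreover'' clause via stable efficiency is reasonable and consistent with the paper, which dispatches it by inspecting the construction of $\Gamma_{\nu,h(x)}$.
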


\begin{proof}

Recall that $\Gamma^k_{\mu,x}$ is defined as a union of families of curves constructed inductively. We shall represent these families by $\Gamma^k_{\mu,i}$, with $i\in\N$. 

First, we will find  $k$ such that for every $n\in \Z$, 
\[h(W^s_{f^k(\D)}(f^n(x))\cap \Lambda_f)\subset W^s_\D(h(f^n(x))).\] 
This proves the thesis of the lemma for every curve in $\Gamma^k_{\mu,0}$.

Note that since  $h$ is uniformly continuous in $\Lambda_f$, given $\e>0$, there exists $\delta>0$ such that $d(h(y),h(z))< \e$ for every $y,z\in \Lambda_f$ when $d(y,z)<\delta$. This implies that for any subset $W\subset W^s_\delta(x)$ (see Theorem \ref{teoStaMan} to recall the definition of $W^s_\delta(x)$), we know $h(W\cap \Lambda_f)\subset W^s_\e(h(x))$. This assertion holds true beyond $x$ being or not being a regular point of $f$. 

Let us fix a hyperbolic block $B$ which $x$ belongs to. Consider $\{n_i\}_{i\in \Z}=\{n\in \Z:f^n(x)\in B\}$.   We claim that for any $\delta$, there exists $k$ such that $W^s_{f^k(\D)}(f^{n_i}(x))$ has a length smaller than $\delta$ for every $i\in\Z$. Otherwise, there exists $\delta$, such that for every $k\in \N$, there exists $n_{i_k}$, where $ W^s_{f^k(\D)}(f^{n_{i_k}}(x))$ has a length greater than $\delta$. Now, taking a sub-sequence, if necessary, we find a point $y\in B$ (because $B$ is compact) that has a segment of the stable manifold contained in $\Lambda$, an absurd. By the contraction property in the stable manifold theorem, we extend our claim to every $n\in \Z$. This means that if we fix $\delta$, there exists $k$ such that $W^s_{f^k(\D)}(f^{n}(x))$ has a shorter length than $\delta$ for every $n\in\Z$.

The above implies that $W^s_{f^k(\D)}(f^n(x)) \subset W^s_\delta(f^n(x))$. 

By the stable manifold theorem applied to $h(x)$ and $\nu$, there exists $\e>0$ such that for every $n\in \Z$, $ W^s(g^n(h(x)))\cap W^s_\e(g^n(h(x)))\subset W^s_\D(g^n(h(x)))$.

Putting everything together, we conclude that for every $n\in \Z$,  
\[h(W^s_{f^k(\D)}(f^n(x))\cap \Lambda_f)\subset W^s(h(f^n(x)))\cap W^s_\e( h(f^n(x)))\subset W^s_\D(h(f^n(x))).\] 

We now have to extend this to $\Gamma^k_{\mu,i}$ for $i\geq 1$ (the other levels in the construction of $\Gamma_{\mu,x}$). For this, we may shrink $\e$ and therefore $\delta$ such that the following property holds: if $\gamma_1,\gamma_2\in \Gamma_{\nu,h(x)}$ are such that $g(\gamma_1)$ and $g(\gamma_2)$ belong to the same $\gamma\in \Gamma_{\nu,h(x)}$, then $d( g(\gamma_1),g(\gamma_2))\geq \e$. This assertion holds because $g$ is continuous, $\Lambda_g$ is compact, and  the distance from $\Lambda_g$ to the border of the disc is positive. 

 Now, if we take $\gamma_1\in \Gamma^k_{\mu,1}$,  $f(\gamma_1)$ is contained in some $\gamma_0 = W^s_{f^k(\D)}(f^n(x))$, and there is a unique $\tilde \gamma_1\in \Gamma_{\nu,h(x)}$ such that $h(f( \gamma_1)\cap \Lambda_f)\subset g(\tilde \gamma_1)\subset $  $W^s_\D(h(f^n(x)))$. Therefore, $h(\gamma_1\cap \Lambda_f)\subset \tilde \gamma_1$. This reasoning extends the property to $\Gamma^k_{\mu,1}$, and by induction, we deduce the claim for every $\gamma\in \Gamma^k_{\mu,x}$. 
 
 The final property of the lemma can be checked manually in each step of the construction of $\Gamma_{\nu,h(x)}$. 
\end{proof}

We are now in condition to prove Theorem \ref{TeoSemimu}.

\begin{proof}[of Theorem \ref{TeoSemimu}]
By Lemma \ref{LemTreeK}, we know  $\gamma \in \Gamma_{\mu,x}\mapsto f^k(\gamma)\in \Gamma^k_{\mu,x}$ is a bijection. By Lemma \ref{LemCurvas}, we construct a function $\gamma\in\Gamma^k_{\mu,x}\mapsto \tilde \gamma\in\Gamma_{\nu,h(x)}$. The composition of these two functions gives us a map $H:\Gamma_{\mu,x}\to \Gamma_{\nu,h(x)}$ that verifies $h(f^k(\gamma)\cap \Lambda_f) \subset H(\gamma)$. Since $\Gamma_{\mu,x}\subset \X_{f,\mu}$ and $\Gamma_{\nu,h(x)}\subset \X_{g,\nu}$, to prove Theorem \ref{TeoSemimu}, we will extend $H$ to the whole tree and check that our desired properties are verified. 

Since $f^k$ is a diffeomorphism between $\D$ and $f^k(\D)$, we can suppose without loss of generality that $k=0$. Given $\hat x\in \X_{f,\mu}$, take $(s_n)_{n\in\N}$ as a sequence of compact connected surfaces such that $\hat x$ is the class of $(s_n)$ according to the definition of $\sim$ in subsection \ref{subsectreemodel}. Since $\hat x\in \X_{f,\mu}$, it must be a minimal element of the natural partial order.  We now enumerate the properties $(s_n)$ verifying the following: 
\begin{enumerate}
\item  $\overline{s_{n+1}}\subset \interior(s_n)$
\item  The boundary of each $s_n$ consists of a finite number of curves from $\Gamma_{\mu,x}$ and $\partial\D$.
\item There is at most one $\gamma\in \Gamma_{\mu,x}$ that verifies $\gamma\subset s_n$ for all $n$. 
\end{enumerate}

For each $s_n$, let us consider $\gamma^n_1,\cdots, \gamma^n_{k_n} \subset \Gamma_{\mu,x}$ the curves that form the border of $s_n$ in the interior of the disk. To simplify our notation, we shall call the collection of these curves as $\partial s_n$.  Now, with the set of curves $H(\gamma^n_1),\cdots, H(\gamma^n_{k_n})$, we can construct a finite number of compact surfaces in $\D$ having some of these curves as the border. Let us call $r_1,\cdots, r_l$ the surfaces of this type that contain $h(s_n \cap \Lambda_f)$. We define $H_1(s_n)=\cap^{l}_{i=1} r_i$, which is in fact one $r_i$ and moreover the smallest one. We call $\partial H_1(s_n)$ the collection of curves that form the border of $H_1(s_n)$. This collection is a subset of $\{H(\gamma^n_1),\cdots, H(\gamma^n_{k_n})\}$.

By definition, $(H_1(s_n))_{n\in\N}$ is a sequence of compact surfaces whose boundaries consist of a finite number of curves from $\Gamma_{\nu,y}$ and $\D$. It is easy to observe that $\overline{H_1(s_{n+1})}\subset \interior(H_1(s_n))$. However, this first pick of $H_1(s_n)$ may not represent an element of $\X_{g,\nu}$, because it may not be minimal. We must first refine $H_1(s_n)$. We can construct $H(s_n)\subset H_1(s_n)$   that verifies the following:
\begin{enumerate}
\item $\overline{H(s_{n+1})}\subset \interior(H(s_n))$
\item The boundary of each $H(s_n)$ consists of a finite number of curves from $\Gamma_{\nu,y}$ and $\partial\D$. Moreover,  $\partial H_1(s_n)\subset \partial H(s_n)$. 
\item If a curve $\tilde \gamma \in \Gamma_{\nu,h(x)}$ verifies $\tilde \gamma \subset H(s_n)$ for all $n$, then $\tilde \gamma \cap h(\Lambda_f\cap s_n)\neq \emptyset$. 
\end{enumerate} 

This is possible by simply removing regions of $H_1(s_n)$ such that  property (3) is verified, and, with some care, property (1) holds. Let us prove that this sequence of surfaces is minimal. We must show that there is at most one $\tilde \gamma\in \Gamma_{\nu,h(x)}$ that verifies $\tilde \gamma\subset H(s_n)$ for all $n$. Suppose $\tilde \gamma\in \Gamma_{\nu,h(x)}$ verifies that. By property (3), we know that $\tilde \gamma \cap h(\Lambda_f\cap s_n)\neq \emptyset$. Therefore, $h^{-1}(\tilde \gamma \cap \Lambda_g)\cap s_n$ is contained in at least one curve $\gamma\in \Gamma_{\mu,x}$. Moreover, it is contained in at most a finite number of curves $\gamma_1,\cdots, \gamma_k\in \Gamma_{\mu,x}$. By the compactness of every $s_n$, there is at least one $\gamma_i\subset s_n$ for every $n$, and therefore there is a unique $\gamma$. From this, if $\tilde \gamma_1\neq \tilde \gamma_2$ verifies $\tilde \gamma_i \subset H(s_n)$ for all $n$, then $\gamma_1$ and $\gamma_2$ must be different, a contradiction with property (3) of $(s_n)$.

Our previous construction allows us to define the map $\hat h:\X_{f,\mu}\to \X_{g,\nu}$ by $\hat h(\hat x)$ as the class of the sequence $(H(s_n))_{n\in\N}$ in $X_{g,\nu}$.  This map naturally verifies $\hat h( \gamma) = H(\gamma)$ and is inherently continuous by our construction.  Let us check that $\hat h$ conjugates the dynamics. Since $\hat h$ is continuous, we only need to check it in $\Gamma_{\mu,x}$. Consider $\gamma,\tilde \gamma\in \Gamma_{\mu,x}$ and $\tilde H(\gamma)\in \Gamma_{\nu,y}$ such that $f(\gamma)\subset \tilde \gamma$ and $g(H(\gamma))\subset \tilde H(\gamma)$. Then, 
\[\hat h (\hat f(\gamma))= H(\hat f(\gamma))= H(\tilde \gamma) =  \tilde H(\gamma) =  \hat g (H(\gamma))=\hat g (\hat h (\gamma)),\]
and therefore $\hat h\circ \hat f = \hat g \circ \hat h$.

The map $\hat h$ is surjective because $h(\Lambda_f)=\Lambda_g$ and since $g$ is stable efficient, we know that $\pi_{g,\nu}(\Lambda_g)=\X_{g,\nu}$. 

For the semi-conjugacy in the other direction, we switch positions of $f$ and $g$ in all of our previous arguments.   
\end{proof}

\begin{proof}[of Theorem \ref{TeoSemi}]
We observe that in Lemma \ref{LemCurvas}, for every $\hat k\geq k$,  the thesis is also verified. If $\mu_1,\cdots, \mu_l$ are the measures associated to the finite ergodic covering property, we can choose a $k$ in Lemma \ref{LemCurvas} common to these measures. Then, the proof of this result follows as the proof of Theorem \ref{TeoSemimu}.
\end{proof}

\section{Proof of Theorem \ref{TeoConvErg}}\label{SecConv}

Last, we prove Theorem \ref{TeoConvErg}. Our first step is to thicken the family of curves constructed in section \ref{treeConstruction}.
Let us recall that given $f\in \text{MD}^r(\D)$ and $\mu\in \M_a(f)$, we picked a regular point $x\in R_{f,\mu}$, defined $\Gamma_{\mu,x}$, and then defined $\Gamma=\cup_{\mu \in \M_a(f)}\Gamma_{\mu,x}$. For this part, we need to work with a family of curves $\Gamma_\mu$ that also define the tree $\X_f$ and verify  $\pi_*(\mu)(\Gamma_\mu)=1$. Since $\Gamma_{\mu,x}$ is countable, $\pi_*\mu(\Gamma_{\mu,x})=0$; therefore, we must thicken it. For this, we define
\[\Gamma_\mu = \bigcup_{x\in R_{f,\mu}}\Gamma_{\mu,x}\]
and redefine $\Gamma=\bigcup_{\mu \in \M_a(f)} \Gamma_\mu$.
Property P4* of the former $\Gamma$ implies that the tree induced by our new $\Gamma$ is the same one as before. We easily deduce the following lemma:

\begin{lema}\label{Enchimento}
Given $f\in \text{MD}^r(\D)$  and $\mu\in \M_a(f)$,  $\pi_*(\mu)(\Gamma_\mu)=1$ and $\pi_{*}(\mu)(\Gamma)=1$ hold true.
\end{lema}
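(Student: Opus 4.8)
The plan is to prove the first equality directly and to deduce the second by monotonicity. The guiding observation is that the thickening from $\Gamma_{\mu,x}$ to $\Gamma_\mu=\bigcup_{x\in R_{f,\mu}}\Gamma_{\mu,x}$ is designed precisely so that every regular point of $\mu$ sits on one of the chosen curves; this is exactly what upgrades a measure-zero countable family to a full-measure one after projection.

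Concretely, I would first record that, since $\X$ is the quotient of $\D$ by the partition in which each $\gamma\in\Gamma$ is a single class, the fibre of $\pi$ over the tree point determined by a curve $\gamma$ is exactly $\gamma\subset\D$. Viewing $\Gamma_\mu$ as the corresponding set of tree points, this gives
\[\pi^{-1}(\Gamma_\mu)=\bigcup_{\gamma\in\Gamma_\mu}\gamma=\bigcup_{x\in R_{f,\mu}}\ \bigcup_{\gamma\in\Gamma_{\mu,x}}\gamma.\]
Now fix $x\in R_{f,\mu}$. By the first step of the construction in subsection \ref{subSecRegHypBlo}, the family $\Gamma_0$ (taken with $n=0$) contains the connected component of $W^s(x)\cap\D$ through $x$, namely $W^s_\D(x)$; hence $x$ lies on a curve $\gamma_x\in\Gamma_{\mu,x}\subseteq\Gamma_\mu$, so $x\in\pi^{-1}(\Gamma_\mu)$. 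As $x$ was arbitrary, $R_{f,\mu}\subseteq\pi^{-1}(\Gamma_\mu)$, and since $\mu(R_{f,\mu})=1$ we conclude $\pi_*(\mu)(\Gamma_\mu)=\mu(\pi^{-1}(\Gamma_\mu))=1$. Because $\Gamma_\mu\subseteq\Gamma=\bigcup_{\mu'\in\M_a(f)}\Gamma_{\mu'}$, monotonicity then forces $\pi_*(\mu)(\Gamma)\geq\pi_*(\mu)(\Gamma_\mu)=1$, i.e.\ $\pi_*(\mu)(\Gamma)=1$ as well.

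The only genuine subtlety I anticipate is measurability: $\Gamma_\mu$ is an uncountable union of countable families and need not be Borel in $\X$. This is harmless, since all that is used is the inclusion $R_{f,\mu}\subseteq\pi^{-1}(\Gamma_\mu)$ of a full-measure set; working in the $\mu$-completion (equivalently, comparing inner and outer measure) pins the value at $1$, and no finer structure of $\Gamma_\mu$ enters. Apart from this bookkeeping remark, the argument is exactly the one-line idea that thickening over all regular points makes the projected family full measure, which is why the lemma can be flagged as an easy consequence.
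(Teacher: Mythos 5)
Your argument is correct and is exactly the elaboration of the paper's one-line proof, which simply notes that the set of regular points has full measure: each regular point $x$ lies on the curve of $\Gamma_{\mu,x}\subseteq\Gamma_\mu$ given by the connected component of $W^s(x)\cap\D$ through $x$, so $R_{f,\mu}\subseteq\pi^{-1}(\Gamma_\mu)$ and the result follows, with the second equality by monotonicity. Your measurability remark is sensible bookkeeping but does not change the substance.
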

\begin{proof}
The set of regular points $\mu$ have full measure.
\end{proof}

\begin{proof}[of Theorem \ref{TeoConvErg}]

Let $f,g\in \text{MD}^r (\D)$ be stable efficient and consider $(\X_f,\hat{f})$ and $(\X_g,\hat{g})$ the natural tree models and $\Gamma_f$ and $\Gamma_g$ the family of curves associated to $f$ and $g$ respectively. Suppose there exist $Y_f\subset \X_f$ and $Y_g\subset \X_g$  and  
$\hat h: Y_f\to Y_g$ such that:
\begin{itemize}
\item $Y_f$ and $Y_g$ are $f$ and $g$ invariants respectively. 
\item $Y_f$ and $Y_g$ have full measure for every aperiodic ergodic measure of $\hat{f}$ and $\hat{g}$.
\item $\hat h$ is a measurable bijection. 
\item    $\hat{g}\circ \hat h= \hat h\circ \hat{f}_{|Y_f}$
\end{itemize}

We will prove the theorem using the inverse limits of a dynamical system. Therefore, we shall establish the background before diving into the proof. We take $\varprojlim\X_f$ and $\varprojlim\X_g$ as the inverse limits of $(\X_f,\hat{f})$ and $(\X_g,\hat{g})$ respectively. For instance, in the case of $\X_f$, it is the space of sequences in $\X_f$ indexed in the non-positive integers $[\cdots,\hat{x}_{-n},\cdots,\hat{x}_0]$ such that $\hat f (\hat x_{-n}) = \hat x_{-n+1}$ for all $n\geq 1$. In said space, we define two maps: 1) the projection $\Pi_f:\varprojlim\X_f\to \X_f$ defined by $\Pi_f([\cdots,\hat{x}_{-n},\cdots,\hat{x}_0]) = \hat{x}_0$ and 2) the dynamical system $\hat{F}: \varprojlim\X_f \to \varprojlim\X_f$ defined by
\[\hat F([\cdots,\hat{x}_{-n},\cdots,\hat{x}_0])= [\cdots,\hat{x}_{-n},\cdots,\hat{x}_0, \hat{f}(\hat{x}_0)].\]
The relationship between the dynamics of $\hat f$ and $\hat F$ is given by the equation $\hat f \circ \Pi_f = \Pi_f \circ \hat F$.

We now relate the dynamics of $f$ with the dynamics of $\hat F$ with the map $\phi_f:\Lambda_f \to \varprojlim\X_f$. Given $x\in \Lambda_f$, take $\hat{x}_{-n}=\pi_f(f^{-n}(x))$ and then
$\phi_f(x) =( [\cdots,\hat{x}_n,\cdots,\hat{x}_0])$.  It is easy to observe that $\hat F \circ \phi_f = \phi_f \circ f$. We would now like for $\phi_f$ to be a measurable bijection. The map $\phi_f$ is surjective because $\pi_f(\Lambda_f)=\X_f$ and $\hat f\circ \pi_f = \pi_f \circ f$. Although it may fail to be injective, we can show that it is injective in a set of full measure. Given $x$ in $\Lambda_f$, we define $[x]$ as the connected component of $\pi_f^{-1}{(\pi_f(x))}\cap \Lambda_f$ that contains $x$ and the set 
\[N_f=\{x\in\Lambda_f:\ \card([x])=1\text{ and }x\in\gamma\text{ for some }\gamma\in \Gamma_f\}.\]

We claim that $\phi_f$ is injective when restricted to $N_f$. If $x_1,x_2\in N_f$ with $\phi_f(x_1)=\phi_f(x_2)$,  then $f^{-n}(x_1)$ and $f^{-n}(x_2)$ are not separated by any curve in $\Gamma_f$, for every $n\geq 0$. Take $\gamma_{-n}\in \Gamma_f$ such that $f^{-n}(x_1),f^{-n}(x_2)\in \gamma_{-n}$, and consider the segment $\hat \gamma_{-n}$ in $\gamma_{-n}$ whose endpoints are $f^{-n}(x_1)$ and $f^{-n}(x_2)$. Our previous observation implies that $f(\hat \gamma_{-n})= \hat \gamma_{-n+1}$ for all $n\geq 1$. Therefore, $\hat \gamma_0= f^n(\hat \gamma_{-n})\subset f^{n}(\D)$, $\hat \gamma_0 \subset \Lambda_f$, and finally $[x_1]=[x_2]$. From this, we deduce that $x_1=x_2$. 

Let us check that $\mu(N_f)=1$ for each $\mu\in \M_a(f)$. Given a regular point $x$ for $\mu$, define $\gamma_{-n}=W_{\D}^s(f^{-n}(x))$ and observe that $[x]=\bigcap_{n\geq 0} f^n(\gamma_{-n})$ by a similar reasoning as in the previous paragraph. Moreover, for regular points it is verified that $\bigcap_{n\geq 0} f^n(\gamma_{-n}) = \{x\}$ and therefore $card([x])=1$. In fact, for any segment $\hat \gamma\subset W^s_\D(x)$ that contains $x$ in its interior, there exists $n\geq 0$ such that $\D\setminus f^{-n}(\hat \gamma)$ has two connected components. Then, $\gamma_{-n}\subset f^{-n}(\hat \gamma)$ and therefore $[x]\subset \hat \gamma$ for any $\hat \gamma$, thus $[x]=\{x\}$. 

From our previous reasoning, we can infer that the map $\phi^{-1}:\phi_f(N_f)\to N_f$ is defined by $\phi_f^{-1}([\cdots,\hat{x}_{-n},\cdots,\hat{x}_0])$ is the only point in the set $\bigcap_{n\geq 0} f^n(\gamma_{-n})$ where $ \gamma_{-n}$ is the curve in $\Gamma_f$ such that $\gamma_{-n} = \hat{x}_{-n}$.

 Since $\Pi_f \circ \phi_f= \pi_f$, the following diagram is commutative:
\[\xymatrix{
	\M_1(f) \ar[r]^{\phi_{f*} }\ar[rd]_{\pi_{f*}} &\M_1( \hat{F} ) \ar[d]^{\Pi_{f*}} \\
	\  & \M_1( \hat{f} ).
}\]

Observe now that  Proposition \ref{PropProp} and Theorem \ref{teoCP01} implies that  $\pi_{f*}:\M_a(f)\to \M_a(\hat f)$ is a bijection. Also, it is known that $\Pi_{f*}:\M_1( \hat{F} ) \to \M_1( \hat{f} ) $ is a bijection. Therefore, $\phi_{f*}$, when restricted to $\M_a(f)$, is a bijection toward $\M_a(\hat{F})$. 

Let us consider for $g$ the objects $\Pi_g,\ \hat G,\ \phi_g$, and $N_g$. We take $\hat Y_f = \Pi_f^{-1}(Y_f)\subset\varprojlim\X_f$ and  $\hat Y_g = \Pi_g^{-1}(Y_g)\subset\varprojlim\X_g$ and define $\hat H: \hat Y_f \to \hat Y_g$ as the induced map of $\hat h$ by
\[	\hat H([\cdots,\hat{x}_{-n},\cdots,\hat{x}_0]) = [\cdots,\hat{h}(\hat{x}_{-n}),\cdots,\hat{h}(\hat{x}_0)].\]

Since $\hat h_{*}:\M_a(\hat f)\to \M_a(\hat g)$ is a bijection, so is the map $\hat H_{*}:\M_a(\hat F)\to \M_a(\hat G)$.

Let us consider $B_f = \phi_f(N_f)\cap \hat Y_f$ and $B_g= \phi_g(N_g)\cap \hat Y_g$. Define $A_f= B_f\cap \hat H^{-1}(B_g)$ and $A_g= B_g\cap \hat H(B_f)$. It is easy to observe that $B_f$ has full measure for every $\mu\in \M_a(\hat F)$ and $B_g$ has full measure for every $\mu \in \M_a(\hat G)$. With this, we see that $A_f$ and $A_g$ also have full measure for every aperiodic measure in their respective context. Define $M_f = \phi_f^{-1}(A_f)$, $M_g = \phi_g^{-1}(A_g)$, and $h:M_f\to M_g$ by 
\[h(x) = \phi_g^{-1}\circ \hat H \circ \phi_f(x).\]

Since  $\phi_f$, $\hat{H}$, and  $\phi_g^{-1}$ are measurable bijections when restricted to $M_f$, $A_f$, and $A_g$, the map $h$ is also a measurable bijection. The inverse $h^{-1}:M_g\to M_f$ defined by $h^{-1}(x) = \phi_f^{-1}\circ \hat H^{-1} \circ \phi_g(x)$ is also measurable.

Since the following diagram is commutative 
$$\xymatrix{
	\mathcal{M}_a(f)   \ar[r]^{h_*}\ar[d]_{\pi_f*}\ar@/_9mm/[dd]_{\phi_f*} &\mathcal{M}_a(g) \ar[d]^{\pi_g*}\ar@/^9mm/[dd]^{\phi_g*} \\
	\mathcal{M}_a(\hat{f})	\ar[r]^{\hat{h}_*}  & \mathcal{M}_a(\hat{g})  \\
	\mathcal{M}_a(\hat{F})\ar[u]_{\Pi_{f}*}\ar[r]^{\hat{H}_*}& \mathcal{M}_a(\hat{G}) \ar[u]^{\Pi_{g}*}  
}$$
and we know that each arrow, except $h_*$, represents bijection, we deduce that $h_*$ is also a bijection.
\end{proof}

	
\bibliographystyle{siam}
\bibliography{Bibliog}

\begin{thebibliography}{10}

\bibitem{Pesin}
{\sc L.~Barreira and Y.~Pesin}, {\em Introduction to Smooth Ergodic Theory},
  Graduate Studies in Mathematics, American Mathematical Society, 2013.

\bibitem{borosti}
{\sc J.~Boro\'{n}ski and S.~\v{S}timac}, {\em Densely branching trees as models
  for h\'{e}non-like and lozi-like attractors}, Adv. Math., 429 (2023),
  p.~109191.

\bibitem{borosti2}
\leavevmode\vrule height 2pt depth -1.6pt width 23pt, {\em The pruning front
  conjecture, folding patterns and classification of h\'{e}non maps in the
  presence of strange attractors}, arXiv:2302.12568,  (2023).

\bibitem{pujcrov}
{\sc S.~Crovisier and E.~Pujals}, {\em Strongly dissipative surface
  diffeomorphisms}, Comment. Math. Helv., 93 (2018), pp.~377--400.

\bibitem{CP02}
\leavevmode\vrule height 2pt depth -1.6pt width 23pt, {\em From zero to
  positive entropy}, arXiv:2302.05484,  (2023).

\bibitem{CPT}
{\sc S.~Crovisier, E.~Pujals, and C.~Tresser}, {\em Mildly dissipative
  diffeomorphisms of the disk with zero entropy}, to appear in Acta Math.,
  (2023).

\bibitem{Hen}
{\sc M.~H\'{e}non}, {\em A two dimensional mapping with a strange attractor},
  Comm. Math. Phys., 50 (1976), pp.~69--77.

\bibitem{Lozi}
{\sc R.~Lozi}, {\em Un attracteur \'etrange (?) du type attracteur de h\'enon},
  J. Phys. Colloques, 39 (1978), pp.~C5--9--C5--10.

\bibitem{MS}
{\sc M.~Misiurewicz, , and S.~\v{S}timac}, {\em Lozi-like maps}, Discrete
  Contin. Dyn. Syst., 38 (2018), pp.~2965--2985.

\bibitem{CLPY}
{\sc E.~P. S.~Crovisier, M.~Lyubich and J.~Yang}, {\em Renormalization of
  unicritical diffeomorphisms of the disk}, arXiv:2401.13559,  (2024).

\bibitem{Viana}
{\sc M.~Viana and K.~Oliveira}, {\em Foundations of Ergodic Theory}, Cambridge
  Studies in Advanced Mathematics, Cambridge University Press, 2016.

\bibitem{WY}
{\sc Q.~Wang and L.-S. Young}, {\em Strange attractors with one direction of
  instability}, Comm. Math. Phys., 218 (2001), pp.~1432--0916.

\end{thebibliography}

\end{document}